\newcommand{\Ir}{{\cal I}(r)}
\newcommand{\Irzero}{{\cal I}(0)}
\newcommand{\tb}{\bar{\tau}}
\newcommand{\ton}{\tau_1}
\newcommand{\ttw}{\tau_2}
\newcommand{\one}{\mathbf 1}
\renewcommand{\graph}[1]{{\mbox{gph}#1}}
\begin{document}

\date{March 11, 2013}
\title{\bf Pointwise Minimum Norm Control Laws for Hybrid Systems}
\author{Ricardo G. Sanfelice\thanks{R. G. Sanfelice is with
the Department of Aerospace and Mechanical Engineering, University of Arizona,
1130 N. Mountain Ave, AZ 85721.
      Email: {\tt\small sricardo@u.arizona.edu}.
      Research partially supported by NSF CAREER Grant no. ECS-1150306 and by AFOSR YIP Grant no. FA9550-12-1-0366.
}
}

\maketitle

%\vspace{-0.3in}
\begin{abstract}
Pointwise minimum norm control laws for hybrid dynamical systems are proposed.
Hybrid systems are given by differential equations
capturing the continuous dynamics or {\em flows}, and by
difference equations capturing the discrete dynamics
or {\em jumps}.
The proposed control laws are defined as
the pointwise minimum norm selection from the set of inputs
guaranteeing a decrease of a control Lyapunov function. 
The cases of individual and common inputs during flows and jumps,
as well as when inputs enter through one of the system dynamics, are considered.
Examples illustrate the results.
\end{abstract}

\section{Introduction}

The construction of asymptotically stabilizing control 
laws from control Lyapunov functions (CLFs) has enabled the
systematic design of feedback laws for nonlinear systems.
Building from earlier results in \cite{Artstein83},
which revealed a key link between the 
availability of a control Lyapunov function and stabilizability (with relaxed controls),
the construction of control laws from Lyapunov inequalities
was rendered as a powerful control design methodology (see also, e.g., 
\cite{Clarke00,Sontag.Sussman.96}, for the connections between CLFs and asymptotic controllability to
the origin).
More importantly, design techniques that
go beyond the possibility of determining the control law 
from the expression of the Lyapunov inequalities 
were proposed and employed in several applications.
The control law introduced in \cite{SontagSYSCON89},
known as Sontag's universal formula,
provides a generic controller construction for nonlinear systems in affine form that (modulo some
extra properties at the origin) only requires the existence of a CLF. 
(Recent extensions to polynomial systems appeared in \cite{Moulay.Perruquetti.05.TAC}).
The constructions introduced in \cite{FreemanKokotovic96}
have the extra property that their pointwise norm is minimum (for a given CLF).
More notably, 
as shown in \cite{FreemanKokotovic96} by making a link between CLFs and the solution to a differential game,
under additional properties, 
pointwise minimum norm control laws guarantee robustness of the closed-loop system.

%and domination redesign \cite{Sepulchre.ea.97,Krstic.Deng.98};
%
%In \cite{Clarke97}, through the construction
%of a nonsmooth control Lyapunov function,
%it is shown that every
%continuous-time system
%that is asymptotically controllable to the origin can be globally stabilized
%by a (discontinuous) feedback law.
%Further results on existence  and equivalences between 
%nonsmooth control Lyapunov functions and  asymptotic controllability
%appeared in \cite{Clarke00,Sontag.Sussman.96,Rifford.01.SIAM,Rifford.01.ESAIM,Kellett.Teel.04.SIAM,Kellett.Teel.04.SCL}.

In this paper, 
pointwise  minimum norm control laws for hybrid dynamical systems are proposed.
Hybrid dynamical systems are given by differential equations
capturing the continuous dynamics or {\em flows}, and by
difference equations capturing the discrete dynamics
or {\em jumps}.
The conditions determining whether flows or jumps should occur
are given in terms of both the state and the inputs.
For this class of hybrid systems, 
control Lyapunov functions are defined by continuously differentiable functions
whose change, both along flows and jumps,
is upper bounded by a negative definite function of the state.
The proposed control law consists of 
a pointwise minimum norm selection from the set of inputs
that guarantees a decrease of the Lyapunov function on each regime. 
We consider the case when the inputs 
acting during flows are different than the inputs acting during jumps,
the case when the inputs are the same, as well as cases when 
inputs affect only the flows or the jumps.
Conditions guaranteeing continuity and globality of the proposed
pointwise minimum norm control laws are also presented.
Our results not only recover the results in \cite{FreemanKokotovic96SIAM}
when specialized to continuous-time systems, but also
provide the discrete-time versions, which do not seem 
available in the literature.

The remainder of the paper is organized as follows.
Section~\ref{sec:CLF}
introduces the framework for hybrid systems, the notion of solution,
and control Lyapunov functions.
Section~\ref{sec:MinNormLaws} presents the results on stabilization by
pointwise minimum norm control laws.
Examples in Section~\ref{sec:Examples} illustrate some of the results.

\medskip

\noindent
{\bf Notation:}
$\reals^{n}$ denotes $n$-dimensional Euclidean space,
$\reals$ denotes the real numbers.
$\realsplus$ denotes the nonnegative real numbers, i.e.,
  $\realsplus=[0,\infty)$.
$\nats$ denotes the natural numbers including $0$, i.e.,
  $\nats=\left\{0,1,\ldots \right\}$.
$\ball$ denotes the closed unit ball in a Euclidean space.
Given a set $K$, $\ol{K}$ denotes its closure.
Given a set $S$, $\partial{S}$ denotes its boundary.
Given  $x\in \reals^n$, $|x|$ denotes the Euclidean vector norm.
Given a set $K\subset \reals^n$ and $x \in \reals^n$,
  $|x|_{K}:= \inf_{y \in K} |x-y|$.
Given $x$ and $y$, $\langle x, y \rangle$ denotes their inner product.
A function $\alpha : \realsplus \to
\realsgeq$ is said to belong to class-${\mathcal K}_{\infty}$
if it is continuous, zero at zero, strictly increasing, and unbounded.
Given a closed set $K \subset \reals^n\times \U_\star$ with  $\star$ being either $c$ or $d$
and $\U_{\star} \subset \reals^{m_\star}$, define 
$\Pi(K):= \defset{x}{
\exists u_\star \in \U_\star
\mbox{ s.t. } (x,u_\star) \in K}$
and
$\Psi(x,K):= \defset{u}{(x,u) \in K}.$
That is, given a set $K$,
$\Pi(K)$ denotes the ``projection'' of $K$ onto $\reals^n$
while, given $x$, $\Psi(x,K)$ denotes the set of values $u$ 
such that $(x,u) \in K$.
Then, for each $x\in\reals^n$, define the set-valued maps
$\Psi_c:\reals^n \rightrightarrows \U_c$,
$\Psi_d:\reals^n \rightrightarrows \U_d$
as
$\Psi_c(x) := \Psi(x,C)$ and 
$\Psi_d(x) := \Psi(x,D)$, respectively.
Given a map $f$, its graph
 is denoted by $\graph(f)$.

%% ASYMPTOTIC CONTROLLABILITY DEFINITION
%\begin{definition}[uniformly globally asymptotically controllability]
%\label{UGAC definition}              
%Let $\A\subset O$ be a compact set and let $\sigma_c,
%\sigma_d:\realsgeq \to \realsgeq$ be nondecreasing functions. The
%hybrid system $\HS$ is uniformly globally asymptotically controllable
%(UGAC) to $\A$ with $\U_c,\U_d \cap \sigma_c, \sigma_d$ controls if
%there exists a function $\beta\in \classKLL$ such that for each
%$x(0,0)\in O$ there exist a solution $(x,u_c,u_d)$ to $\HS$ with
%absolutely continuous signals $u_c:\dom u_c \to \U_C, u_d:\dom u_d \to
%\U_d$ satisfying
%\begin{eqnarray}
%|x(t,j)|_{\A} &\leq& \beta(|x(0,0)|_{\A}, t, j) \\
%|u_c(t,j)|_{\A} &\leq& \sigma_c(|x(t,j)|_{\A}) \\
%|u_d(t,j)|_{\A} &\leq& \sigma_d(|x(t,j)|_{\A})\ ,
%\end{eqnarray}
%for all $(t,j) \in \dom x$.
%\end{definition}         

\section{Preliminaries on Hybrid Systems and Control Lyapunov Functions}
\label{sec:CLF}

In this section, we define
control Lyapunov functions (CLFs) for hybrid systems $\HS$ 
with data $(C,f,D,g)$ and given by
\begin{eqnarray}\label{eqn:HS}
\HS\ \left\{
\begin{array}{llllll}
\dot{x} & = & f(x,u_c)& \qquad & (x,u_c) \in C  \\
x^+ & = & g(x,u_d)& \qquad & (x,u_d) \in D,
\end{array}
\right.
\end{eqnarray}
where 
the set $C \subset \reals^n\times\U_c$ is the {\em flow set},
%the set-valued map $F:\reals^n\times\reals^{m_c} \rightrightarrows \reals^n$ is the {\em flow map},
the map $f:\reals^n\times\reals^{m_c} \to \reals^n$ is the {\em flow map},
the set $D\subset \reals^n\times\U_d$ is the {\em jump set}, and
the map $g:\reals^n \to \reals^n$ is the {\em jump map}.
%the set-valued map $g:\reals^n \rightrightarrows \reals^n$ is the {\em jump map}.
The space for the state is $x \in \reals^n$ and the space for the input $u = (u_c,u_d)$
is $\U = \U_c \times \U_d$, where $\U_c \subset \reals^{m_c}$ and 
$\U_d \subset \reals^{m_d}$.
At times, we will require $\HS$ to satisfy the following mild properties.

\begin{definition}[hybrid basic conditions]
\label{def:HBC}
A hybrid system $\HS$ is said to satisfy the {\em hybrid basic conditions}
if its data $(C,f,D,g)$ is such that
%\footnote{A set-valued map $S:\reals^n\rightrightarrows \reals^m$ is {\it outer
%  semicontinuous} at $x\in\reals^n$ if for each sequence $\{x_{i}\}_{i=1}^{\infty}$ converging to a
%point $x \in \reals^n$ and each sequence $y_{i} \in S(x_{i})$ converging to a
%point $y$, it holds that $y \in S(x)$; see \cite[Definition 5.4]{RockafellarWets98}. 
%Given a set $X \subset \reals^n$, 
%it is {\it outer semicontinuous relative to $X$}
%if the set-valued mapping from $\reals^n$ to $\reals^m$ defined by $S(x)$ 
%for $x \in X$ and $\emptyset$ for $x \not \in X$ is outer semicontinuous at each $x \in X$.
%It is {\it locally bounded} if,
%for each compact set $\K \subset \reals^n$ there exists a compact set $\K'\subset \reals^n$ such that
%$S(\K) := \cup_{x \in \K} S(x) \subset \K'$.}
\begin{list}{}{\itemsep.3cm} 
\item[(A1)] $C$ and $D$ are closed subsets of $\reals^n\times {\cal U}_c$ and $\reals^n\times {\cal U}_d$, respectively;
% and ${\cal U}$ is a closed subset of $\reals^m$.
\item[(A2)] $f:\reals^n\times\reals^{m_c}\to \reals^n$ is continuous;
\item[(A3)] $g:\reals^n\times\reals^{m_d}\to \reals^n$ is continuous.
%\item[(A3)] $G:\reals^n\times\reals^{m_d}\rightrightarrows \reals^n$ is outer semicontinuous relative to $D$  
%  and locally bounded, and for all $(x,u_d) \in D$, $g(x,u_d)$ is nonempty.
\end{list}
\end{definition}

% Solutions prelim
Solutions to hybrid systems $\HS$ are given in terms 
of hybrid arcs and hybrid inputs on hybrid time domains.
Hybrid time domains are 
subsets $\SSS$ of $\realsgeq\times\nats$
that, for each $(T,J)\in \SSS$, 
$\SSS\ \cap\ \left( 
  [0,T]\times\{0,1,...J\} \right)$ 
  can be written as 
  $\cup_{j=0}^{J-1} 
\left([t_j,t_{j+1}],j\right)$
for some finite sequence 
of times $0=t_0\leq t_1 \leq t_2...\leq t_J$.\footnote{This property is to hold at each $(T,J)\in\SSS$, but $\SSS$ can be unbounded.}
A hybrid arc $\phi$ is a function on a hybrid time domain 
that, for each $j\in\nats$, 
$t\mapsto \phi(t,j)$ is absolutely continuous on the interval $\defset{t }{(t,j) \in \dom \phi}$, 
while a hybrid input $u$ is a function on a hybrid time domain 
that, for each $j\in\nats$, $t\mapsto u(t,j)$ is 
Lebesgue measurable and locally essentially bounded on the interval $\defset{t }{(t,j) \in \dom u}$.
Then, a solution to the hybrid system $\HS$ is given by a pair $(\phi,u)$, $u = (u_c,u_d)$,
with $\dom \phi = \dom u (= \dom (\phi,u))$ and satisfying the dynamics of $\HS$,
where $\phi$ is a hybrid arc and $u$ a hybrid input.
A solution pair $(\phi,u)$ to $\HS$ is said to be {\it complete} if $\dom (\phi,u)$ 
is unbounded and {\it maximal} if there does not
exist another pair $(\phi,u)'$ such that $(\phi,u)$ is a truncation of
$(\phi,u)'$ to some proper subset of $\dom (\phi,u)'$. 
For more details about solutions to hybrid systems, see \cite{Sanfelice.10.CDC}.

We introduce the concept of control Lyapunov function for hybrid systems $\HS$;
see \cite{Sanfelice.11.TAC.CLF} for more details and
conditions on $\HS$ guaranteeing its existence.

\begin{definition}[control Lyapunov function]
\label{control Lyapunov function definition}              
Given a compact set $\A \subset \reals^n$
and sets $\U_c \subset \reals^{m_c}, \U_d \subset \reals^{m_d}$,
a continuous function $V:\reals^n\to \reals$,
continuously differentiable on an open set 
containing $\overline{\Pi(C)}$
 is a {\em control Lyapunov function
with $\U$ controls for $\HS$} if
there exist $\alpha_1, \alpha_2\in \classKinfty$
and a positive definite function $\alpha_3$
such that\footnote{Following \cite[Definition 4.1]{FreemanKokotovic96SIAM}, 
\eqref{eqn:CLFFlow} 
can be replaced by
$\inf_{u_c  \in \Psi_c(x)}\ 
\langle \nabla V(x), f(x,u_c) \rangle  <0$
for all $x \in \Pi(C) \setminus \A$,
since, then,
\cite[Proposition 4.3]{FreemanKokotovic96SIAM} guarantees 
the existence of a continuous positive definite function $\alpha_3$ satisfying \eqref{eqn:CLFFlow}
(similarly for 
\eqref{eqn:CLFJump}).}
\begin{eqnarray}
\label{eqn:CLFBounds}
& & \alpha_1(|x|_\A)\ \ \leq\ \ V(x)\ \  \leq\ \ \alpha_2(|x|_\A)
\qquad \qquad \forall x \in \Pi(C)\cup \Pi(D) \cup g(D),
\\
 \label{eqn:CLFFlow}
& & \hspace{-0.2in}\inf_{u_c  \in \Psi_c(x)}\  
%\sup_{\xi \in f(x,u_c)} \langle \nabla V(x), \xi \rangle
\langle \nabla V(x), f(x,u_c) \rangle
  \leq  - \alpha_3(|x|_{\A})
\qquad\qquad\quad \forall x \in \Pi(C),
\\
\label{eqn:CLFJump}
& & \inf_{u_d  \in \Psi_d(x)} 
%\ \sup_{\xi \in g(x,u_d)} 
V(g(x,u_d))  -  V(x)  \leq  - \alpha_3(|x|_{\A})
\qquad\quad \forall x \in \Pi(D). 
\end{eqnarray}
\end{definition}                

\section{Minimum Norm State-Feedback Laws for Hybrid Systems}
\label{sec:MinNormLaws}

Given a hybrid system $\HS$
satisfying the hybrid basic conditions, 
a compact set $\A$, and a control Lyapunov function $V$ 
satisfying Definition~\ref{control Lyapunov function definition}, 
define, for each $r \in \realsgeq$, the set
$$
\Ir := \defset{x \in \reals^n}{V(x) \geq r}.
$$
Moreover, for each $(x,u_c)\in \reals^n\times\reals^{m_c}$ and $r \in \realsgeq$,
define the function
\begin{eqnarray*}
\Gamma_c(x,u_c,r) & : = & 
\left\{
\begin{array}{ll}\displaystyle
\langle \nabla V(x), f(x,u_c) \rangle  + {\alpha}_3(|x|_\A) & 
 \mbox{ if }  (x,u_c) \in C \cap (\Ir\times \reals^{m_c}),\\
-\infty & \mbox{ otherwise }
\end{array}
\right.
\end{eqnarray*}
and, for each $(x,u_d)\in \reals^n\times\reals^{m_d}$ and $r \in \realsgeq$,
the function
\begin{eqnarray*}
\Gamma_d(x,u_d,r) & : = & 
\left\{
\begin{array}{ll}\displaystyle
%\max_{\xi \in g(x,u_d)}
V(g(x,u_d))  -
 V(x) + {\alpha}_3(|x|_\A) &  \mbox{ if } 
    (x,u_d)\in  D \cap (\Ir\times \reals^{m_d}),\\
-\infty & \mbox{ otherwise. }
\end{array}
\right.
\end{eqnarray*}
Then, 
evaluate the functions 
$\Gamma_c$
and 
$\Gamma_d$
at points 
$(x,u_c,r)$
and
$(x,u_d,r)$
where
$r = V(x)$
to
define the functions
\begin{equation}\label{eqn:Upsilons}
\begin{array}{c}
(x,u_c) \mapsto 
\Upsilon_c(x,u_c) 
 := \Gamma_c(x,u_c,V(x)) ,
(x,u_d) \mapsto
 \Upsilon_d(x,u_d)
 :=  \Gamma_d(x,u_d,V(x))
 \end{array}
\end{equation}
and the set-valued maps
\begin{eqnarray}\label{eqn:calTcAndd}
\begin{array}{c}
{\cal T}_c(x) \! : =  \! \Psi_c(x) \cap
\defset{u_c \in \U_c}{ \Upsilon_c(x,u_c) \leq 0},
{\cal T}_d(x)\! : =  \! \Psi_d(x) \cap 
\defset{u_d \in \U_d}{\Upsilon_d(x,u_d) \leq 0}.
\end{array}
\end{eqnarray}
Furthermore, define \begin{equation}\label{eqn:CsetAnyR}
R_c := \Pi(C) \cap \defset{x\in \reals^n}{V(x) > 0}
\end{equation}
and 
\begin{equation}\label{eqn:DsetAnyR}
R_d := \Pi(D) \cap  \defset{x\in \reals^n}{V(x) > 0}.
\end{equation}
When, for each $x$, 
the functions
$u_c \mapsto \Upsilon_c(x,u_c)$
and
$u_d \mapsto \Upsilon_d(x,u_c)$ are convex,
and
the set-valued maps
$\Psi_c$ and $\Psi_d$
have nonempty closed convex values
%and
%${\cal T}_c$ and ${\cal T}_d$
%are lower semicontinuous with nonempty convex values
on $R_c$ and $R_d$, respectively,
we have that 
${\cal T}_c(x)$
and 
${\cal T}_d(x)$
have nonempty convex closed values on
\eqref{eqn:CsetAnyR}
and 
on
\eqref{eqn:DsetAnyR},
respectively (this follows from \cite[Proposition 4.4]{FreemanKokotovic96SIAM}).
Then, 
${\cal T}_c$
and 
${\cal T}_d$
have unique elements of minimum norm on $R_c$ and $R_d$, respectively,
and 
their minimal selections 
\begin{eqnarray*}
\rho_c: R_c \to \U_c, \qquad
\rho_d: R_d \to \U_d
\end{eqnarray*}
are given by
\begin{eqnarray}\label{eqn:mc}
\rho_c(x) := \arg \min \defset{|u_c|}{ u_c \in {{\cal T}}_c(x)},
\\
\label{eqn:md}
\rho_d(x) := \arg \min \defset{|u_d|}{ u_d \in {{\cal T}}_d(x)}.
\end{eqnarray}
Moreover, these selections are continuous under further properties of $\Psi_c$ and $\Psi_d$.

The hybrid system $\HS$ under the effect of the control pair
$(\rho_c,\rho_d)$ 
in \eqref{eqn:mc}, \eqref{eqn:md}
is given by
\begin{eqnarray}\label{eqn:HScl}
\widetilde{\HS}\ \left\{
\begin{array}{llllll}
\dot{x} & = & \widetilde{f}(x):= f(x,\rho_c(x))& \qquad & x \in \widetilde{C}  \\
x^+ & = & \widetilde{g}(x):=  g(x,\rho_d(x))& \qquad & x \in \widetilde{D}
\end{array}
\right.
\end{eqnarray}
with
$\widetilde{C}  :=  \defset{x \in \reals^n}{(x,\rho_c(x)) \in C}$ and
$\widetilde{D}  :=  \defset{x \in \reals^n}{(x,\rho_d(x)) \in D}$.
%\cite[Proposition 2.19]{FreemanKokotovic96}.
%Using these constructions, we presents 
The above arguments and constructions enable the stabilization results in the following sections.

\begin{remark}
\label{rmk:NotCommonAlpha}
%\startmodif
When
bounds \eqref{eqn:CLFFlow} and \eqref{eqn:CLFJump}
hold for functions $\alpha_{3,c}$ and $\alpha_{3,d}$,
then a common function $\alpha_3$ is given by 
$\alpha_3(s) = \min\{\alpha_{3,c}(s),\alpha_{3,d}(s)\}$ for all $s\geq0$.
In such a case,
%while the current construction of the set-valued maps
%${\cal T}_c$
%and
%${\cal T}_d$
%uses a common function $\alpha_3$,
%which is obtained from the definition of CLF in Definition~\ref{control Lyapunov function definition}.
the expressions of the pointwise minimum norm control laws
\eqref{eqn:mc}
and
\eqref{eqn:md}
could also be given in terms of 
$\alpha_{3,c}$ and $\alpha_{3,d}$
by defining ${\cal T}_c$ and ${\cal T}_d$
in terms of $\alpha_{3,c}$ and $\alpha_{3,d}$, respectively.
\end{remark}

\subsection{Practical stabilization using min-norm hybrid control}

Proposition~\ref{prop:MinNormPracticalStabilization} below
establishes that the pointwise minimum norm controller in \eqref{eqn:mc}-\eqref{eqn:md}
asymptotically stabilizes the compact set\footnote{A compact set $\A$ is said to be asymptotically stable
for a closed-loop system (e.g., $\widetilde{\HS}$ in \eqref{eqn:HScl}) if: $\bullet$ for each $\eps > 0$ there exists
$\delta >0$ such that 
each maximal solution $\phi$ starting from
$\A + \delta \ball$
satisfies $\phi(t,j) \in \A+\eps\ball$
for each $(t,j)\in\dom \phi$, and $\bullet$
each maximal solution is bounded and the 
complete ones satisfy $\lim_{t+j\to\infty}|\phi(t,j)|_{\A}=0$.
} 
\begin{equation}\label{eqn:SetAr}
\A_r := \defset{x \in \reals^n}{V(x) \leq r}
\end{equation}
for the  hybrid system restricted to $\Ir$.
More precisely,
given $r > 0$,
we restrict the flow and jump sets
of the hybrid system $\HS$ by the set $\Ir$, 
which leads to
\begin{eqnarray*}
\HS_{\cal I}\ \left\{
\begin{array}{llllll}
\dot{x} & = & f(x,u_c)& \ & (x,u_c) \in C  \cap (\Ir\times \reals^{m_c}) \\
x^+ & = & g(x,u_d)& \ & (x,u_d) \in D  \cap (\Ir\times \reals^{m_d}).
\end{array}
\right.
\end{eqnarray*}

\begin{proposition}
\label{prop:MinNormPracticalStabilization}
Given a compact set $\A \subset \reals^n$ and 
a hybrid system $\HS = (C,f,D,g)$ satisfying the hybrid basic conditions,
suppose there exists a control Lyapunov function $V$
with ${\cal U}$ controls for $\HS$.
Furthermore, suppose the following conditions hold:
\begin{list}{}{\itemsep.3cm} 
\item[(M1)]
The set-valued maps $\Psi_c$ and $\Psi_d$
are lower semicontinuous\footnote{A set-valued map $S:\reals^n\rightrightarrows\reals^m$
is lower semicontinuous if
for each $x \in \reals^n$ 
one has that 
$
\liminf_{x_i \to x} S(x_i) \supset S(x)
$,
where $\liminf_{x_i \to x} S(x_i) = \defset{z}{\forall x_i \to x, \exists z_i \to z \mbox{ s.t. } z_i \in S(x_i)}$
is the {\em inner limit} of $S$ (see \cite[Chapter 5.B]{RockafellarWets98}).} 
with convex values.
%\item[M2)]
%The functions
%$
%\Gamma_c
%$
%and 
%$
%\Gamma_d
%$
%are upper semicontinuous.
\item[(M2)]
For every $r>0$ and every $x \in \Pi(C) \cap \Ir$,
the function
$u_c \mapsto \Gamma_c(x,u_c,r)$ is convex on $\Psi_c(x)$
and, for every $r>0$ and every $x \in \Pi(D) \cap \Ir$,
the function
$u_d \mapsto \Gamma_c(x,u_d,r)$ is convex on $\Psi_d(x)$.
\end{list}
Then, for every $r>0$, the state-feedback law pair
\begin{eqnarray*}
\rho_c: R_c \cap \Ir \to \U_c, \qquad
\rho_d: R_d \cap \Ir \to \U_d
\end{eqnarray*}
defined as
\begin{eqnarray}\label{eqn:mc-practical}
\rho_c(x) &:=& \arg \min \defset{|u_c|}{ u_c \in {{\cal T}}_c(x)}
 \qquad \forall x \in R_c \cap \Ir,
\\
\label{eqn:md-practical}
\rho_d(x) &:=& \arg \min \defset{|u_d|}{ u_d \in {{\cal T}}_d(x)}  \qquad \forall x \in R_d \cap \Ir
\end{eqnarray}
%is continuous and 
renders the compact set 
$\A_r$ asymptotically stable for $\HS_{\cal I}$.
Furthermore, if 
the set-valued maps $\Psi_c$ and $\Psi_d$
have closed graph then $\rho_c$ and $\rho_d$ are continuous.
\end{proposition}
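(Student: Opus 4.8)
The plan is to verify that the closed-loop hybrid system $\widetilde{\HS}_{\cal I}$ obtained by plugging $(\rho_c,\rho_d)$ into $\HS_{\cal I}$ satisfies the hypotheses of a standard hybrid invariance/Lyapunov theorem, so that $\A_r$ is asymptotically stable. First I would record what the constructions in Section~\ref{sec:MinNormLaws} already give us: because $V$ is a CLF, for every $r>0$ and every $x\in R_c\cap\Ir$ the set ${\cal T}_c(x)$ is nonempty (the CLF flow inequality \eqref{eqn:CLFFlow} guarantees some $u_c\in\Psi_c(x)$ with $\Upsilon_c(x,u_c)\le 0$), and under (M1)--(M2) it is closed and convex by \cite[Proposition 4.4]{FreemanKokotovic96SIAM}; hence the minimum-norm selection $\rho_c(x)$ is well defined and single-valued, and similarly for $\rho_d$. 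By construction, for every $x\in R_c\cap\Ir$ with $(x,\rho_c(x))\in C$,
\[
\langle \nabla V(x), f(x,\rho_c(x))\rangle \;=\; \Upsilon_c(x,\rho_c(x)) - \alpha_3(|x|_\A) \;\le\; -\alpha_3(|x|_\A),
\]
and for every $x\in R_d\cap\Ir$ with $(x,\rho_d(x))\in D$,
\[
V(g(x,\rho_d(x))) - V(x) \;=\; \Upsilon_d(x,\rho_d(x)) - \alpha_3(|x|_\A) \;\le\; -\alpha_3(|x|_\A).
\]
Together with the sandwich bound \eqref{eqn:CLFBounds}, these are exactly the decrease conditions of a Lyapunov function for $\widetilde{\HS}_{\cal I}$ relative to the compact set $\A$, but restricted to the region $\Ir = \{V(x)\ge r\}$.

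Next I would translate "decrease outside $\A_r$" into "asymptotic stability of $\A_r$". The key point is that on $\Ir\setminus\A_r$ we have $V(x)>r>0$, so $x\in R_c\cup R_d$ and the strict-decrease estimates above apply with $\alpha_3(|x|_\A)$ bounded below by a positive constant on any set where $V(x)$ stays in a compact interval $[r,\bar r]$ (using \eqref{eqn:CLFBounds} to convert sublevel information on $V$ into a bound on $|x|_\A$, and positive definiteness of $\alpha_3$). Since $V$ is continuous and its sublevel sets intersected with $\Pi(C)\cup\Pi(D)\cup g(D)$ are compact by \eqref{eqn:CLFBounds} and compactness of $\A$, forward invariance of $\A_r$ follows: along flows $V$ is nonincreasing once $V\le$ a level, and at jumps $V(g(x,\rho_d(x)))\le V(x)$, so no solution starting in a sublevel set of $V$ can escape it; in particular solutions starting near $\A_r$ stay near it, giving stability, and the strict decrease by at least a fixed amount per unit of ordinary time (flows) and the strict decrease at jumps give attractivity of $\A_r$ for complete solutions, via the hybrid invariance principle / a hybrid Lyapunov theorem such as those in \cite{Sanfelice.10.CDC}. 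Here one has to be slightly careful because $\widetilde{\HS}_{\cal I}$ need not itself satisfy the hybrid basic conditions (the feedbacks $\rho_c,\rho_d$ may be merely measurable), so I would invoke the Lyapunov-function sufficient condition directly rather than a converse-theorem-based argument, checking that $\widetilde f$ and $\widetilde g$ remain such that maximal solutions exist and the Lyapunov inequalities hold along them — the decrease inequalities are pointwise in $x$ and therefore hold along any solution regardless of regularity of the selection.

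For the final continuity claim, I would argue that when $\Psi_c$ and $\Psi_d$ additionally have closed graph, then (being also lower semicontinuous with convex values by (M1)) they are continuous set-valued maps with closed convex values, and $\Upsilon_c$, $\Upsilon_d$ are continuous (since $f$, $g$, $V$, $\nabla V$, $\alpha_3$ are continuous on the relevant closed sets $C$, $D$ intersected with $\Ir\times\reals^{m}$). Then ${\cal T}_c(x) = \Psi_c(x)\cap\{u_c : \Upsilon_c(x,u_c)\le 0\}$ is an intersection of a continuous closed-convex-valued map with the (closed-convex-valued, outer semicontinuous, and — by the CLF property and (M2), on $R_c\cap\Ir$ where a strict Slater-type point is available from \eqref{eqn:CLFFlow}) lower semicontinuous sublevel map, hence ${\cal T}_c$ is itself lower semicontinuous with nonempty closed convex values on $R_c\cap\Ir$; the minimum-norm (metric projection of the origin onto ${\cal T}_c(x)$) selection of such a map is continuous, a fact established in \cite[Proposition 4.4 and Proposition 2.7]{FreemanKokotovic96SIAM}. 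The same reasoning gives continuity of $\rho_d$.

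The main obstacle I anticipate is the nonemptiness/lower-semicontinuity of the sublevel set map $x\mapsto\{u_c:\Upsilon_c(x,u_c)\le 0\}$ and hence of ${\cal T}_c$ at points on the boundary $\{V(x)=r\}$: there the CLF flow inequality \eqref{eqn:CLFFlow} only gives $\Upsilon_c(x,u_c)\le 0$ (nonstrict) rather than a strict Slater point, so the usual argument for lower semicontinuity of a sublevel map of a continuous convex function can degenerate, and one must use the footnoted strengthening of Definition~\ref{control Lyapunov function definition} (the strict inequality version, which by \cite[Proposition 4.3]{FreemanKokotovic96SIAM} recovers a positive definite $\alpha_3$) to guarantee a strict inequality on $R_c\cap\Ir$ and thereby legitimately apply \cite[Proposition 4.4]{FreemanKokotovic96SIAM}. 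Reconciling this boundary behavior with the definition of $\Ir$ as a closed set is the delicate bookkeeping step; everything else is a routine assembly of the Lyapunov decrease inequalities and citation of the set-valued-analysis facts already invoked in the surrounding text.
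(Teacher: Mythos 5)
Your proposal follows essentially the same route as the paper's proof: use (M1)--(M2) together with \cite[Proposition 4.4]{FreemanKokotovic96SIAM} to conclude that ${\cal T}_c$ and ${\cal T}_d$ are lower semicontinuous with nonempty closed convex values, so the minimum-norm selections are well defined and satisfy the pointwise Lyapunov decrease inequalities, after which asymptotic stability of $\A_r$ for $\HS_{\cal I}$ follows from a hybrid Lyapunov theorem (the paper cites \cite[Theorem 3.18]{Goebel.ea.11}) and continuity follows from the closed-graph hypothesis combined with the continuity of minimal selections of lower semicontinuous, closed-graph, convex-valued maps (the paper cites \cite[Proposition 2.19]{FreemanKokotovic96}). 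The only differences are presentational: you unpack the invariance/attractivity argument rather than citing the Lyapunov theorem directly, and you explicitly flag the Slater-point subtlety at $\{V(x)=r\}$ that the paper leaves implicit in its citation of \cite[Proposition 4.4]{FreemanKokotovic96SIAM}; neither changes the structure of the argument.
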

\begin{proof}
Given the CLF $V$ for $\HS$,
by Definition~\ref{control Lyapunov function definition},
we have \eqref{eqn:CLFBounds} and, for every $r > 0$,
\eqn{
 \label{eqn:CLFFlow-HSr}
\inf_{u_c  \in \Psi_c(x)}
\langle \nabla V(x), f(x,u_c) \rangle  & \leq &  - \alpha_3(|x|_{\A})
\qquad\qquad\quad \forall x \in \Pi(C) \cap \Ir,
\\
\label{eqn:CLFJump-HSr}
\inf_{u_d  \in \Psi_d(x)}  
%\max_{\xi \in g(x,u_d)} 
V(g(x,u_d))  -  V(x) & \leq & - \alpha_3(|x|_{\A})
\qquad\qquad\quad \forall x \in \Pi(D) \cap \Ir,
}
from where $\Gamma_c$ and $\Gamma_d$
are defined.
Using the continuity properties of $f$ and
$g$ obtained from (A2) and (A3) of the hybrid basic conditions,
%Lemma~\ref{lemma lower semicontinuity of sets},
and 
continuous differentiability of $V$,
 it follows that, for every $r \geq 0$,
$\Gamma_c$ and $\Gamma_d$ are continuous on
$C \cap (\Ir\times \reals^{m_c})$ and on $D \cap (\Ir\times \reals^{m_d})$, respectively.
Since $C$ and $D$ are closed by (A1) of the hybrid basic conditions and $V$ is continuous,
the sets
$C \cap (\Ir \times \reals^{m_c})$ and $D \cap (\Ir \times \reals^{m_d})$ are closed
for each $r$.
By the closedness property of
$C$ and $D$ along with assumption (M1),
the set-valued maps $\Psi_c$ and $\Psi_d$ have 
nonempty closed convex values on $R_c$ and $R_d$, respectively.
Using 
(M2),
%continuity of $V$,
%and the hybrid basic conditions,
the functions
$u_c \mapsto \Upsilon_c(x,u_c)$
and
$u_d \mapsto \Upsilon_d(x,u_c)$ 
defined in \eqref{eqn:Upsilons}
are convex on $R_c$ and $R_d$, respectively.
%and
%${\cal T}_c$ and ${\cal T}_d$
%are lower semicontinuous with nonempty convex values
Then, \cite[Proposition 4.4]{FreemanKokotovic96SIAM}
implies that ${\cal T}_c$ and ${\cal T}_d$
are lower semicontinuous with nonempty closed convex values
on $R_c$ and $R_d$, respectively.
%% from TAC CLF submission
%by lower semicontinuity of the
%set-valued maps $\Psi_c$ and $\Psi_d$ in R1)
%and by upper semicontinuity of the functions $\Gamma_c$ and $\Gamma_d$ in M2),
%we have that $\widetilde{S}_c$ and $\widetilde{S}_d$ are lower semicontinuous.
%This property follows by 
%\cite[Corollary 2.13]{FreemanKokotovic96}
%%Lemma~\ref{lemma lower semicontinuity of sets}
%with $z = (x,r)$, $z' = u_\star$, $W(z) = \Psi_\star(x)$, and $w = \Gamma_\star$.
%By  \eqref{eqn:CLFFlow}-\eqref{eqn:CLFJump} 
%and the construction of $\Gamma_\star$, we have that, for each $r >0$,
%$\widetilde{S}_c$ and $\widetilde{S}_d$
%are nonempty
%on $\Pi(C) \cap \Ir$ and 
%on $\Pi(D) \cap \Ir$, respectively.
%By the 
%convexity property of the functions $\Gamma_c$ and $\Gamma_d$ in M3)
%and of the values of the set-valued maps $\Psi_c$ and $\Psi_d$ in R1),
%we have that, for each $r > 0$, 
%$\widetilde{S}_c$ and $\widetilde{S}_d$
%are  convex valued 
%on $\Pi(C) \cap \Ir$ and 
%on $\Pi(D) \cap \Ir$, respectively.
Moreover, 
${\cal T}_c$ and ${\cal T}_d$
have unique elements of minimum norm,
and their minimal selections 
\begin{eqnarray}
& & \rho_c: R_c \to \U_c\\
& & \rho_d: R_d \to \U_d
\end{eqnarray}
on 
\eqref{eqn:CsetAnyR}
and 
on
\eqref{eqn:DsetAnyR}
(respectively)
are,
by definition, 
given by
\eqref{eqn:mc-practical}
and 
\eqref{eqn:md-practical} (respectively).
Then, 
from \eqref{eqn:mc-practical} and \eqref{eqn:md-practical}, 
we have
\begin{eqnarray*}
& & \rho_c(x) \in \Psi_c(x), \quad \Upsilon_c(x,\rho_c(x)) \leq 0\quad \forall x \in R_c \cap \Ir \\
& & \rho_d(x) \in \Psi_d(x), \quad \Upsilon_d(x,\rho_d(x)) \leq 0\quad \forall x \in R_d \cap \Ir.
\end{eqnarray*}
Using the definitions of 
$\Psi_c, \Psi_d$
and 
$\Upsilon_c, \Upsilon_d$, we have
\eqn{
 \label{eqn:CLFFlow-HSr-mc}
\langle \nabla V(x), f(x,\rho_c(x)) \rangle  & \leq &  - \alpha_3(|x|_{\A})
\qquad\qquad\quad \forall x \in \Pi(C) \cap \Ir,
\\
\label{eqn:CLFJump-HSr-md}
%\max_{\xi \in g(x,\rho_d(x))} 
V(g(x,\rho_d(x)))  -  V(x) & \leq & - \alpha_3(|x|_{\A})
\qquad\qquad\quad \forall x \in \Pi(D) \cap \Ir.
}
Then, for every $r>0$, we have a state-feedback pair
$(\rho_c,\rho_d)$
that
renders the compact set $\A_r$ asymptotically
stable for $\HS_{\cal I}$.  
This property
follows
% from the 
%fact that
%the hybrid system $\HS_{\cal I}$
%satisfies the hybrid basic conditions
%and 
from an application of the
Lyapunov
stability result in 
\cite[Theorem 3.18]{Goebel.ea.11}.
%\cite[Corollary 7.7]{SanfeliceGoebelTeel05}.

If $\Psi_c$ and $\Psi_d$ have closed
graph, then
we have that
the graph of ${\cal T}_c$ and ${\cal T}_d$
are closed since,
for $\star = c,d$,
$$\graph({\cal T}_\star)\! =\! \graph(\Psi_\star(x)) \cap \graph(\defset{u_\star \in \U_\star}{ \Upsilon_\star(x,u_\star) \leq 0\!\!}),$$
where the first graph is closed by assumption while the second one is closed by the closedness and continuity properties
of $\U_\star$ and $\Upsilon_\star$, respectively.
%see \cite[Proposition 7.1]{FreemanKokotovic96SIAM})
Then, using \cite[Proposition 2.19]{FreemanKokotovic96},
the minimal selections 
\begin{eqnarray*}
\rho_c: R_c \to \U_c, \qquad
\rho_d: R_d \to \U_d
\end{eqnarray*}
on 
\eqref{eqn:CsetAnyR}
and 
on
\eqref{eqn:DsetAnyR},
which are given by 
\eqref{eqn:mc-practical}
and 
\eqref{eqn:md-practical}, respectively,
are continuous.\footnote{Note that by the hybrid basic conditions of $\HS$, continuity
of $\rho_c$ and $\rho_d$, and closedness of $\Ir$,
the hybrid system $\HS_{\cal I}$ 
with the control laws
\eqref{eqn:mc-practical}
and 
\eqref{eqn:md-practical}
applied to it 
satisfies the hybrid basic conditions.}
\end{proof}

\begin{remark}
The state-feedback law 
\eqref{eqn:mc-practical}-\eqref{eqn:md-practical}
asymptotically stabilizes $\A_r$ for $\HS_{\cal I}$
(but not necessarily for $\HS$ as without 
an appropriate extension of these laws to $\Pi(C)$ and $\Pi(D)$, respectively,
there could exist solutions to the closed-loop system
that jump out of $\A_r$).
This point motivates the
following result on stabilization by 
a control law that has pointwise minimum norm at points in ${\cal I}(r)$, but not everywhere,
and 
the
global stabilization 
result in the next section.
Finally, note that the assumptions placed on $\HS$, such as the 
existence of a CLF,
can be relaxed by imposing them on $\HS_{\cal I}$
instead.
\end{remark}

\begin{theorem}
\label{thm:MinNormStabilizationOnIr}
Under the conditions of Proposition~\ref{prop:MinNormPracticalStabilization},
for every $r >0$ there exists a 
state-feedback law pair
\begin{eqnarray*}
\rho'_c: R_c \to \U_c, \qquad
\rho'_d: R_d \to \U_d
\end{eqnarray*}
defined on $R_c \cap \Ir$ and $R_d \cap \Ir$ as
\begin{eqnarray}\label{eqn:mc-practical2}
\rho'_c(x) &:=& \arg \min \defset{|u_c|}{ u_c \in {{\cal T}}_c(x)} 
\qquad \forall x \in R_c \cap \Ir,
\\
\label{eqn:md-practical2}
\rho'_d(x) &:=& \arg \min \defset{|u_d|}{ u_d \in {{\cal T}}_d(x)}
 \qquad \forall x \in R_d \cap \Ir
\end{eqnarray}
%is continuous and 
respectively,
that renders the compact set 
$\A_r$ asymptotically stable for $\HS$.
Furthermore, if 
the set-valued maps $\Psi_c$ and $\Psi_d$
have closed graph then $\rho'_c$ and $\rho'_d$ are continuous on 
$R_c \cap \Ir$ and $R_d \cap \Ir$, respectively.
\end{theorem}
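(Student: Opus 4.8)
The plan is to follow the construction and reasoning of the proof of Proposition~\ref{prop:MinNormPracticalStabilization}, changing only one thing: the minimum‑norm selections are used on all of $R_c$ and $R_d$ rather than only on $R_c\cap\Ir$ and $R_d\cap\Ir$, and this extra freedom is precisely what is needed to make the target sublevel set forward invariant for the \emph{unrestricted} system $\HS$ (recall the remark preceding the statement). The key observation is that the control‑Lyapunov inequalities \eqref{eqn:CLFFlow}--\eqref{eqn:CLFJump} of Definition~\ref{control Lyapunov function definition} hold on all of $\Pi(C)$ and $\Pi(D)$, not merely on $\Ir$; hence, under (M1)--(M2), the arguments recalled in Section~\ref{sec:MinNormLaws} (preceding Proposition~\ref{prop:MinNormPracticalStabilization}) show that ${\cal T}_c$ and ${\cal T}_d$ have nonempty closed convex values, and therefore unique minimum‑norm selections, on all of $R_c$ and $R_d$, respectively.

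First I would set $\rho'_c$ equal to the minimum‑norm selection of ${\cal T}_c$ on $R_c\cap\Ir$, as in \eqref{eqn:mc-practical2}, and on $R_c\setminus\Ir$ equal to any selection of ${\cal T}_c$ (for definiteness, the minimum‑norm one, which is moreover continuous when $\Psi_c$ has closed graph), and symmetrically for $\rho'_d$ with ${\cal T}_d$. Since $\rho'_c(x)\in{\cal T}_c(x)\subset\Psi_c(x)$ for all $x\in R_c$ and $\rho'_d(x)\in{\cal T}_d(x)\subset\Psi_d(x)$ for all $x\in R_d$, one gets $(x,\rho'_c(x))\in C$ on $R_c$, $(x,\rho'_d(x))\in D$ on $R_d$, and, from $\Upsilon_c(x,\rho'_c(x))\le0$, $\Upsilon_d(x,\rho'_d(x))\le0$ together with \eqref{eqn:Upsilons},
\begin{equation*}
\langle\nabla V(x),f(x,\rho'_c(x))\rangle\le-\alpha_3(|x|_\A)\ \ \forall x\in R_c,\qquad V(g(x,\rho'_d(x)))-V(x)\le-\alpha_3(|x|_\A)\ \ \forall x\in R_d .
\end{equation*}
Because $V(x)\le\alpha_2(|x|_\A)$, one has $\alpha_3(|x|_\A)>0$ whenever $V(x)>0$, so $V$ is nonincreasing along flows of the closed‑loop system $\widetilde{\HS}$ in \eqref{eqn:HScl} and strictly decreasing across its jumps. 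Consequently $t+j\mapsto V(\phi(t,j))$ is nonincreasing along every solution $\phi$ to $\widetilde{\HS}$, hence every sublevel set $\{x:V(x)\le c\}$, $c\ge0$, is forward invariant for $\widetilde{\HS}$; in particular so is $\A_r$.

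It then remains to upgrade forward invariance and decrease into asymptotic stability of the compact set $\A_r$ for $\HS$, which I would do by the same hybrid Lyapunov stability result invoked in Proposition~\ref{prop:MinNormPracticalStabilization}, namely \cite[Theorem 3.18]{Goebel.ea.11}, applied with the continuous function $x\mapsto\max\{V(x)-r,0\}$, which vanishes exactly on $\A_r$, satisfies class‑${\cal K}_\infty$ bounds in $|x|_{\A_r}$ on the relevant sublevel set by \eqref{eqn:CLFBounds}, is nonincreasing along flows and strictly decreasing across jumps, and is strictly decreasing along flows on $\Pi(C)\cap\{V>r\}$. Equivalently, one argues directly: maximal solutions are bounded because $\phi$ remains in the compact set $\{V\le V(\phi(0,0))\}$; the $\varepsilon$--$\delta$ estimate follows from $\{V\le c\}\downarrow\A_r$ in the Hausdorff sense as $c\downarrow r$ (compactness of $\A_r$, continuity of $V$); and for a complete solution $V(\phi(t,j))\downarrow V_\infty$ with necessarily $V_\infty\le r$, since otherwise $\phi$ would stay in the compact set $\{V_\infty\le V\le V(\phi(0,0))\}$, on which $\alpha_3(|\cdot|_\A)$ has a positive minimum, so the decrease inequalities would force $V(\phi(t,j))\to-\infty$ along the (unbounded in $t$ or in $j$) hybrid time domain, a contradiction; thus $|\phi(t,j)|_{\A_r}\to0$. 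For the continuity assertion, the argument is verbatim that of Proposition~\ref{prop:MinNormPracticalStabilization}: if $\Psi_c,\Psi_d$ have closed graph then so do ${\cal T}_c,{\cal T}_d$, whence \cite[Proposition 2.19]{FreemanKokotovic96} gives continuity of their minimum‑norm selections on $R_c$ and $R_d$, in particular on $R_c\cap\Ir$ and $R_d\cap\Ir$.

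The only genuine subtlety beyond bookkeeping is this last step: translating ``$V$ nonincreasing, strictly decreasing where $V>r$'' into asymptotic stability of $\A_r$ (rather than of $\A$) for the full system $\HS$, which forces one to rule out complete solutions that hover at $V>r$ and to phrase the stability estimate for the set $\A_r$ — exactly where compactness of $\A_r$, the bounds \eqref{eqn:CLFBounds}, and the uniform positivity of $\alpha_3\circ|\cdot|_\A$ away from $\A$ enter. Everything else is a direct transcription of the proof of Proposition~\ref{prop:MinNormPracticalStabilization} with $\Ir$ enlarged to $\reals^n$ at the construction stage and with $\HS_{\cal I}$ replaced by $\HS$.
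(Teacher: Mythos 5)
Your proposal is correct and follows essentially the same route as the paper: apply Proposition~\ref{prop:MinNormPracticalStabilization} on $\Ir$, extend the feedback into $\A_r$ (the paper allows any $V$-nonincreasing extension to $\Pi(C)\cap\A_r$ and $\Pi(D)\cap\A_r$, while you take the pointwise minimum norm selection of ${\cal T}_c$, ${\cal T}_d$ on all of $R_c$, $R_d$, which the preamble to Section~\ref{sec:MinNormLaws} already justifies), and conclude via \cite[Theorem 3.18]{Goebel.ea.11}. The extra detail you supply in converting the decrease inequalities into asymptotic stability of $\A_r$, and the continuity argument, are exactly what the paper delegates to its citations.
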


The result follows using Proposition~\ref{prop:MinNormPracticalStabilization}
and the fact that,
from the definition of CLF in Definition~\ref{control Lyapunov function definition},
since the right-hand side of \eqref{eqn:CLFFlow} is negative definite with respect to $\A$
(respectively, \eqref{eqn:CLFJump})
the state-feedback $\rho_c$ (respectively, $\rho_d$) in 
\eqref{eqn:mc} (respectively, \eqref{eqn:md})
can be extended -- not necessarily as a pointwise minimum norm law -- to every point in $\Pi(C) \cap \A_r$ 
(respectively, $\Pi(D) \cap \A_r$) and guarantee that $V$ is nonincreasing.
The asymptotic stability of $\A_r$ for $\HS$ then follows from an application of
\cite[Theorem 3.18]{Goebel.ea.11}.  
Finally, as the definition of 
${{\cal T}}_c$
and 
${{\cal T}}_d$
suggest, 
the 
norm-minimality of $\rho_c$ and $\rho_d$ are functions
of $V$ and $\alpha_3$, and different such choices would give
different pointwise minimum norm control laws.

\subsection{Global stabilization using min-norm hybrid control}

The result in the previous section guarantees a practical stability property
through the use of a pointwise minimum norm state-feedback control law.
Now, we consider the global stabilization of a compact
set via continuous state-feedback laws $(\rho_c,\rho_d)$ with pointwise minimum norm.
% which leads to the closed-loop system 
%\begin{eqnarray}\label{eqn:HScl}
%\widetilde{\HS}\ \left\{
%\begin{array}{llllll}
%\dot{x} & = &  f(x,\rho_c(x))& \qquad & x \in \widetilde{C}  \\
%x^+ & \in & g(x,\rho_d(x))& \qquad & x \in \widetilde{D}.
%\end{array}
%\right.
%\end{eqnarray}
For such a purpose, extra conditions 
are required to hold nearby the compact set.
For continuous-time systems, such conditions correspond
to the so-called 
{\em continuous control property} and
 {\em small control property} \cite{SontagSYSCON89,FreemanKokotovic96,Krstic.Deng.98}.
To that end, given a compact set $\A$ and a control Lyapunov function $V$ satisfying Definition~\ref{control Lyapunov function definition},
for each $x \in \reals^n$, 
define
\begin{eqnarray}\label{eqn:Tprimec}
{\cal T}'_c(x) &:=& 
\Psi_c(x) \cap S'_c(x,V(x)), \\ \label{eqn:Tprimed}
{\cal T}'_d(x) &:=& 
\Psi_d(x) \cap S'_d(x,V(x)),
\end{eqnarray}
where, for each $x \in \reals^n$ and each $r \geq 0$,
\begin{eqnarray}\label{eqn:SGlobal}
\begin{array}{l}
{S}'_c(x,r) :=
\left\{
\begin{array}{lll}
S^\circ_c(x,r) & \mbox{ if }  r > 0,\\
\rho_{c,0}(x) & \mbox{ if } r=0,
\end{array}
\right.\ \
{S}'_d(x,r)  :=
\left\{
\begin{array}{lll}
S^\circ_d(x,r) & \mbox{ if }  r > 0,\\
\rho_{d,0}(x) & \mbox{ if } r=0,
\end{array}
\right.
\end{array}
\end{eqnarray}
\begin{eqnarray*}
S^\circ_c(x,r) & = & 
\left\{
\begin{array}{ll}
\defset{u_c \in \U_c}{ \Gamma_c(x,u_c,r) \leq 0} &
 \mbox{ if } x \in \Pi(C) \cap \Ir,\\
\reals^{m_c} & \mbox{ otherwise},
\end{array}
\right. \\
S^\circ_d(x,r) & = & 
\left\{
\begin{array}{ll}
\defset{u_d \in \U_d}{ \Gamma_d(x,u_d,r) \leq 0} &
 \mbox{ if } x \in \Pi(D) \cap \Ir,\\
\reals^{m_d} & \mbox{ otherwise},
\end{array}
\right.
\end{eqnarray*}
and
the feedback law pair
$$ \rho_{c,0}:\reals^n \to  \U_c,\qquad \rho_{d,0}:\reals^n \to \U_d$$
%is such that
%$(\rho_{c,0},\rho_{d,0})(\A) = 0$
%%$\kappa_{c,0}:\reals^n \to \U_c$ and $\kappa_{d,0}:\reals^n \to \U_d$
%and 
induces (strong) forward invariance of $\A$, that is,
\begin{list}{}{\itemsep.3cm} 
\item[(M3)]
Every maximal solution 
$t \mapsto \phi(t,0)$
to 
$
\dot{x} = f(x,\rho_{c,0}(x))$, $x \in \Pi(C)\cap \A
$
satisfies $|\phi(t,0)|_\A = 0$ for all $(t,0) \in \dom \phi$;
\item[(M4)]
Every maximal solution 
$j \mapsto \phi(0,j)$ 
to 
$
x^+ = g(x,\rho_{d,0}(x))$, $x \in \Pi(D)\cap \A
$
satisfies $|\phi(0,j)|_\A = 0$ for all $(0,j) \in \dom \phi$.
\end{list}
Under the conditions in Proposition~\ref{prop:MinNormPracticalStabilization},
the maps in \eqref{eqn:SGlobal}
are lower semicontinuous for every $r > 0$.  
To be able to make continuous selections at $\A$,
these maps are further required to be lower semicontinuous
for $r=0$.
These conditions resemble those already reported in \cite{FreemanKokotovic96}
for continuous-time systems.

\begin{theorem}
\label{thm:MinNormStabilization}
Given a compact set $\A \subset \reals^n$ and 
a hybrid system $\HS = (C,f,D,g)$ satisfying the hybrid basic conditions,
suppose there exists a control Lyapunov function $V$
with ${\cal U}$ controls for $\HS$.
Moreover, suppose
that conditions (M1)-(M2) of Proposition~\ref{prop:MinNormPracticalStabilization} hold.
If the
feedback law pair 
$(\rho_{c,0}:\reals^n \to  \U_c$, $\rho_{d,0}:\reals^n \to \U_d)$
is such that
%$(\rho_{c,0},\rho_{d,0})(\A) = 0$
%and
conditions (M3) and (M4) hold, and
\begin{list}{}{\itemsep.3cm} 
\item[(M5)]
The set-valued map ${\cal T}'_c$ in \eqref{eqn:Tprimec}
is lower semicontinuous at
each $x\in \Pi(C)\cap \Irzero$,
\item[(M6)]
The set-valued map ${\cal T}'_d$  in \eqref{eqn:Tprimed}
is lower semicontinuous at
each $x\in \Pi(D)\cap \Irzero$
\end{list}
hold,
then the state-feedback law pair
\begin{eqnarray*}
\rho_c: \Pi(C) \to \U_c, \qquad 
\rho_d: \Pi(D) \to \U_d
\end{eqnarray*}
defined as
\begin{eqnarray}\label{eqn:mc-global}
\rho_c(x) := \arg \min \defset{|u_c|}{ u_c \in {{\cal T}}'_c(x)} \ \ \forall x \in \Pi(C)
\\
\label{eqn:md-global}
\rho_d(x) := \arg \min \defset{|u_d|}{ u_d \in {{\cal T}}'_d(x)} \ \ \forall x \in \Pi(D)
\end{eqnarray}
renders the compact set $\A$ globally asymptotically stable for $\HS$.
Furthermore,
if the set-valued maps $\Psi_c$ and $\Psi_d$
have closed graph 
and 
$(\rho_{c,0},\rho_{d,0})(\A) = 0$
then $\rho_c$ and $\rho_d$ are continuous.
\end{theorem}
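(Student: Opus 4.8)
The plan is to mirror the structure of the proof of Proposition~\ref{prop:MinNormPracticalStabilization}, adding the ingredients needed to handle the behavior at $\A$ (where $V$ may vanish) and to upgrade practical stability to genuine global asymptotic stability of $\A$. First I would record that, away from $\A$, conditions (M1)--(M2) together with \cite[Proposition 4.4]{FreemanKokotovic96SIAM} give that ${\cal T}'_c$ and ${\cal T}'_d$ coincide with ${\cal T}_c$ and ${\cal T}_d$ on $\defset{x}{V(x)>0}$ and hence are lower semicontinuous with nonempty closed convex values there; at points of $\A$, the replacement of $S^\circ_\star$ by $\rho_{\star,0}(x)$ in \eqref{eqn:SGlobal} makes the values nonempty (a singleton intersected appropriately, using that $\rho_{\star,0}(x)\in\U_\star$ on $\A$) and convex, and (M5)--(M6) supply lower semicontinuity exactly at the troublesome points $x\in\Pi(C)\cap\Irzero$ and $x\in\Pi(D)\cap\Irzero$. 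Combining the two regimes yields that ${\cal T}'_c$ and ${\cal T}'_d$ are lower semicontinuous with nonempty closed convex values on all of $\Pi(C)$ and $\Pi(D)$, so by the minimum-norm selection theory the maps $\rho_c$, $\rho_d$ in \eqref{eqn:mc-global}--\eqref{eqn:md-global} are well defined and single-valued.

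Next I would extract the Lyapunov decrease certificates. For $x$ with $V(x)>0$, membership $\rho_c(x)\in{\cal T}'_c(x)={\cal T}_c(x)$ gives $(x,\rho_c(x))\in C$ and $\langle\nabla V(x),f(x,\rho_c(x))\rangle\le-\alpha_3(|x|_\A)$, exactly as in \eqref{eqn:CLFFlow-HSr-mc}, and likewise $V(g(x,\rho_d(x)))-V(x)\le-\alpha_3(|x|_\A)$ as in \eqref{eqn:CLFJump-HSr-md}. For $x\in\A$ (equivalently $|x|_\A=0$ since $\A$ is compact and, by \eqref{eqn:CLFBounds}, $V(x)=0\iff x\in\A$ on $\Pi(C)\cup\Pi(D)\cup g(D)$), the feedback reduces to $\rho_c(x)=\rho_{c,0}(x)$, $\rho_d(x)=\rho_{d,0}(x)$, and (M3)--(M4) say precisely that the resulting flow and jump maps keep solutions in $\A$; since $\alpha_3(0)=0$, the inequalities $\langle\nabla V(x),\widetilde f(x)\rangle\le0$ and $V(\widetilde g(x))-V(x)\le0$ continue to hold on $\A$. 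Thus $V$ is a Lyapunov function for the closed-loop $\widetilde\HS$ with the sandwich bounds \eqref{eqn:CLFBounds} and a decrease that is strict outside $\A$ along both flows and jumps, while $\A$ is forward invariant. Global asymptotic stability of $\A$ for $\widetilde\HS$ then follows from \cite[Theorem 3.18]{Goebel.ea.11}, noting that completeness/boundedness of maximal solutions is obtained from the sublevel sets of $V$ being forward invariant and compact (using properness of $V$ via $\alpha_1,\alpha_2\in\classKinfty$).

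For the continuity claim, I would argue as in Proposition~\ref{prop:MinNormPracticalStabilization}: if $\Psi_c,\Psi_d$ have closed graph, then on $\defset{x}{V(x)>0}$ the graphs of ${\cal T}'_c={\cal T}_c$ and ${\cal T}'_d={\cal T}_d$ are closed by the continuity of $\Upsilon_c,\Upsilon_d$ and closedness of $\U_c,\U_d$; near $\A$ the additional hypothesis $(\rho_{c,0},\rho_{d,0})(\A)=0$ forces the selected values to tend to $0$ as $|x|_\A\to0$, which glues the closed-graph property across $\Irzero$. Having lower semicontinuity everywhere (from (M5)--(M6) and the already-established l.s.c.\ off $\A$) plus closed graph, \cite[Proposition 2.19]{FreemanKokotovic96} gives continuity of the minimal selections $\rho_c$ on $\Pi(C)$ and $\rho_d$ on $\Pi(D)$.

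The main obstacle I expect is the analysis at the set $\A$ itself: verifying nonemptiness, convexity, and lower semicontinuity of ${\cal T}'_c,{\cal T}'_d$ across the boundary where the definition \eqref{eqn:SGlobal} switches from $S^\circ_\star$ to $\rho_{\star,0}$, and checking that the Lyapunov decrease degrades gracefully (to non-strict) rather than failing outright. This is where (M3)--(M6) and the condition $(\rho_{c,0},\rho_{d,0})(\A)=0$ do the real work; the off-$\A$ part of the argument is essentially a restatement of Proposition~\ref{prop:MinNormPracticalStabilization} combined with the extension idea used in Theorem~\ref{thm:MinNormStabilizationOnIr}. A secondary point requiring care is confirming that the closed-loop $\widetilde\HS$ satisfies the hybrid basic conditions (so that \cite[Theorem 3.18]{Goebel.ea.11} applies), which follows once $\rho_c,\rho_d$ are known to be continuous, but in the merely-l.s.c.\ case one instead invokes the invariance/decrease of $V$ directly on solution pairs.
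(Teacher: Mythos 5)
Your argument for the global asymptotic stability claim is essentially the paper's: lower semicontinuity and nonempty closed convex values of ${\cal T}'_c,{\cal T}'_d$ on $\Pi(C)$ and $\Pi(D)$ via (M1)--(M2), (M5)--(M6) and \cite[Proposition 4.4]{FreemanKokotovic96SIAM}; well-defined minimum-norm selections; the decrease inequalities with $-\alpha_3(|x|_\A)$ on the right; and \cite[Theorem 3.18]{Goebel.ea.11}. If anything, you are more careful than the paper at points of $\A$, where \eqref{eqn:SGlobal} switches to the singleton $\rho_{\star,0}(x)$ and the strict decrease must be replaced by the forward invariance supplied by (M3)--(M4); the paper simply writes the inequalities on all of $\Pi(C)$ and $\Pi(D)$ without flagging that membership in ${\cal T}'_\star(x)$ no longer encodes $\Gamma_\star\le 0$ when $V(x)=0$.

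The one genuine divergence is the continuity argument at $\A$. The paper invokes \cite[Theorem~4.5]{Sanfelice.11.TAC.CLF} to produce a continuous stabilizing pair $(\kappa_c,\kappa_d)$ vanishing on $\A$ and then squeezes $0\le|\rho_\star(x)|\le|\kappa_\star(x)|$ by pointwise minimality. You instead assert that $(\rho_{c,0},\rho_{d,0})(\A)=0$ ``forces the selected values to tend to $0$'' and speak of gluing a closed-graph property across $\Irzero$; as written this is an assertion rather than a proof, but it can be made rigorous without the external theorem: for $x_0\in\A$ and any $x_i\to x_0$, (M5) provides $z_i\in{\cal T}'_c(x_i)$ with $z_i\to\rho_{c,0}(x_0)=0$, and minimality gives $|\rho_c(x_i)|\le|z_i|\to 0$. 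Filled in this way your route is more elementary and self-contained than the paper's comparison-feedback argument, which in exchange leans on an existence theorem already established elsewhere. One caution: \cite[Proposition 2.19]{FreemanKokotovic96} (closed graph plus l.s.c.) is what handles continuity away from $\A$; at $\A$ the closed-graph mechanism is not what does the work --- it is the collapse of ${\cal T}'_\star$ to $\{0\}$ --- so that step should be phrased as a direct limit argument rather than as part of the closed-graph machinery.
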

\begin{proof}
The proof follows the ideas of the proof of \cite[Proposition 7.1]{FreemanKokotovic96SIAM}.
Proceeding as in the proof of Proposition~\ref{prop:MinNormPracticalStabilization},
using (M5) and (M6), we have that
${\cal T}'_c$ and ${\cal T}'_d$ 
are lower semicontinuous with nonempty closed values
on $\Pi(C)$ and $\Pi(D)$, respectively.
Then, 
${\cal T}'_c$ and ${\cal T}'_d$
have unique elements of minimum norm,
and their minimal selections 
\begin{eqnarray}
& & \rho_c: \Pi(C) \to \U_c\\
& & \rho_d: \Pi(D) \to \U_d
\end{eqnarray}
on 
$\Pi(C)$
and 
$\Pi(D)$
(respectively)
are given by
\eqref{eqn:mc-global}
and 
\eqref{eqn:md-global} (respectively).
Then, 
from \eqref{eqn:mc-global} and \eqref{eqn:md-global}, 
we have
\begin{eqnarray*}
& & \rho_c(x) \in \Psi_c(x), \ \ \Gamma_c(x,\rho_c(x),V(x)) \leq 0\ \ \ \forall x \in \Pi(C) \\
& & \rho_d(x) \in \Psi_d(x), \ \ \Gamma_d(x,\rho_d(x),V(x)) \leq 0\ \ \ \forall x \in \Pi(D).
\end{eqnarray*}
Using the definitions of 
$\Psi_c, \Psi_d$
and 
$\Gamma_c, \Gamma_d$, we have
\eqn{\nonumber
% \label{eqn:CLFFlow-HSr-mc-global}
\langle \nabla V(x), f(x,\rho_c(x)) \rangle  & \leq &  - \alpha_3(|x|_{\A})
\qquad \forall x \in \Pi(C),
\\
\nonumber
%\label{eqn:CLFJump-HSr-md-global}
%\max_{\xi \in g(x,\rho_d(x))} 
V(g(x,\rho_d(x)))  -  V(x) & \leq & - \alpha_3(|x|_{\A})
\qquad \forall x \in \Pi(D).
}
Then, 
the set $\A$ is 
globally asymptotically stable for the closed-loop system $\widetilde{\HS}$
by an application of the Lyapunov stability theorem for hybrid systems
\cite[Theorem 3.18]{Goebel.ea.11}.
%% OLD way to prove asymptotic stability
%Proposition~\ref{prop:MinNormPracticalStabilization} establishes
%that, for every $r >0$, solutions starting from points $x  \not \in \A$, 
%$V(x) = r$, converge to $\A_r$.
%Using M4) and M5), 
%solutions to $\widetilde{\HS}$ cannot leave $\A$ from points $x \in \A$.
%Then, $\A$ is 
%uniformly attractive and
%forward invariant for the closed-loop system.

When 
the set-valued maps $\Psi_c$ and $\Psi_d$
have closed graph, from
Proposition~\ref{prop:MinNormPracticalStabilization} we have that 
$\rho_c$ and $\rho_d$ are continuous 
on $\Pi(C)\setminus \A$ and
on $\Pi(D)\setminus \A$, respectively.
Moreover, 
if
$(\rho_{c,0},\rho_{d,0})(\A) = 0$,
 \cite[Theorem~4.5]{Sanfelice.11.TAC.CLF}
implies that there exists a continuous feedback pair 
$(\kappa_c,\kappa_d)$ -- not necessarily of pointwise minimum norm --
asymptotically stabilizing the compact set $\A$
and with the property $(\kappa_c,\kappa_d)(\A) = 0$ 
(the pair $(\kappa_c,\kappa_d)$ vanishes on
$\A$ due to the fact that 
the only possible selection for $r=0$ is 
the pair $(\rho_{c,0},\rho_{d,0})$, which vanishes at such points).
Since $\rho_c$ and $\rho_d$ have pointwise minimum norm,
we have
\begin{eqnarray}
& & 0\leq |\rho_c(x)| \leq |\kappa_c(x)| \qquad \forall x \in \Pi(C) \\
& & 0\leq |\rho_d(x)| \leq |\kappa_d(x)| \qquad \forall x \in \Pi(D).
\end{eqnarray}
Then, since $\kappa_c$ and $\kappa_d$ are continuous and vanish at points in $\A$, 
the laws $\rho_c$ and $\rho_d$ are continuous on $\Pi(C)$ and $\Pi(D)$, respectively.
%% OLD way to prove asymptotic stability
%Since, by Lemma~\ref{lemma:HBCwithContinuousFeedback},
%$\widetilde{\HS}$ satisfies the hybrid basic conditions,
%the claim follows from \cite[Proposition 6.1]{GoebelTeel06}.
\end{proof}

\subsection{The case when the inputs affect only flows or only jumps}

% only flows
The results in the previous sections also hold when 
inputs only affect either the flows or jumps, but not both.
In particular, we consider the special case 
when $u_c$ is the only input, in which case
$\HS$ becomes
\begin{eqnarray}\label{eqn:HSflowinput}
\HS_{c}\ \left\{
\begin{array}{lllrlll}
\dot{x} & = & f(x,u_c)& \qquad  (x,u_c) &\!\!\!\! \in C  \\
x^+ & = & g(x)& \qquad  x &\!\!\!\! \in D
\end{array}
\right.
\end{eqnarray}
with
%the set $C \subset \reals^n\times\U_c$ is the {\em flow set},
%the map $f:\reals^n\times\reals^{m_c} \to \reals^n$ is the {\em flow map},
%the set 
$D\subset \reals^n$
% is the {\em jump set}, 
and 
$g:\reals^n \to \reals^n$.
%is the {\em jump map}.
When the only input is $u_d$, $\HS$ becomes
\begin{eqnarray}\label{eqn:HSjumpinput}
\HS_d\ \left\{
\begin{array}{lllrll}
\dot{x} & = & f(x)& \qquad  x &\!\!\!\!\in C  \\
x^+ & = & g(x,u_d)& \qquad  (x,u_d) &\!\!\!\!\in D
\end{array}
\right.
\end{eqnarray}
with, in this case,
$C \subset \reals^n$ and $f:\reals^n\to \reals^n$.
The following results follow by combining the earlier results.

\begin{corollary}
\label{coro:MinNormPracticalStabilizationFlow}
Given a compact set $\A \subset \reals^n$ and 
a hybrid system $\HS_c = (C,f,D,g)$ as in \eqref{eqn:HSflowinput} satisfying the hybrid basic conditions,
suppose there exists a control Lyapunov function $V$
with ${\cal U}$ controls for $\HS_c$.
Furthermore, 
suppose the following conditions hold:
\begin{list}{}{\itemsep.3cm} 
\item[(M1c)]
The set-valued map $\Psi_c$
is lower semicontinuous
with convex values.
\item[(M2c)]
For every $r>0$ and every $x \in \Pi(C) \cap \Ir$,
the function
$u_c \mapsto \Gamma_c(x,u_c,r)$ is convex on $\Psi_c(x)$.
\end{list}
Then, for every $r>0$, there exists a state-feedback law
\begin{eqnarray}
& & \rho'_c: \Pi(C) \to \U_c
\end{eqnarray}
defined on 
$R_c \cap \Ir$
 as in 
\eqref{eqn:mc-practical2}
that renders the compact set 
$\A_r$ asymptotically stable for $\HS_c$.
Moreover, if
the set-valued map $\Psi_c$ has a closed graph then $\rho'_c$ is continuous on $\Pi(C) \cap \Ir$.
Furthermore, if 
the zero feedback law  
$\rho_{c,0}:\reals^n \to \{0\} \subset \U_c$
is such that 
condition (M3) holds and if (M5) holds, then $\rho_c$ in \eqref{eqn:mc-global}
is globally asymptotically stabilizing.  
Furthermore, 
if the set-valued map $\Psi_c$ 
has closed graph then $\rho_c$ is continuous.
\end{corollary}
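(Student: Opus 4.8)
The plan is to obtain Corollary~\ref{coro:MinNormPracticalStabilizationFlow} as a direct specialization of Theorem~\ref{thm:MinNormStabilizationOnIr} and Theorem~\ref{thm:MinNormStabilization}, treating the jump map $g$ of $\HS_c$ in \eqref{eqn:HSflowinput} as one carrying a trivial (void) input, so that every hypothesis of those results concerning the discrete data either reduces to something already available or holds vacuously. Since $g$ depends on $x$ only, $\Gamma_d$ collapses to $x \mapsto V(g(x)) - V(x) + \alpha_3(|x|_\A)$ on $D \cap \Ir$, the convexity requirement of (M2) on the jump part is automatic, and ${\cal T}_d$, ${\cal T}'_d$ become trivially lower semicontinuous, closed-graph maps whose minimal ``selection'' $\rho_d$ is void; meanwhile (M1c) and (M2c) are exactly (M1) and (M2) restricted to the continuous data. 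The first step is then to record that the only substantive use of the jump data in the proof of Proposition~\ref{prop:MinNormPracticalStabilization}/Theorem~\ref{thm:MinNormStabilizationOnIr} is to produce the decrease along jumps, $V(g(x)) - V(x) \le -\alpha_3(|x|_\A)$ for $x \in \Pi(D) \cap \Ir$, which here is precisely \eqref{eqn:CLFJump} from the definition of a CLF for $\HS_c$. Combined with the flow decrease inequality obtained for $\rho'_c$ exactly as in Proposition~\ref{prop:MinNormPracticalStabilization}, and with the fact that the closed loop satisfies the hybrid basic conditions (continuity of $g$, together with $\rho'_c$ defined on the closed set $R_c \cap \Ir$, or continuous there when $\Psi_c$ has closed graph), the extension argument of Theorem~\ref{thm:MinNormStabilizationOnIr} and \cite[Theorem 3.18]{Goebel.ea.11} yield asymptotic stability of $\A_r$ for $\HS_c$; note no extension is needed on the jump side because \eqref{eqn:CLFJump} already holds on all of $\Pi(D)$. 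Continuity of $\rho'_c$ under the closed-graph assumption on $\Psi_c$ is then the verbatim closed-graph/\cite[Proposition 2.19]{FreemanKokotovic96} step of Proposition~\ref{prop:MinNormPracticalStabilization}, the closed-graph hypothesis on $\Psi_d$ being vacuous.

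For the global claim I would invoke Theorem~\ref{thm:MinNormStabilization} with $\rho_{c,0} \equiv 0$ and a void $\rho_{d,0}$. Its hypotheses (M1)--(M2) are (M1c)--(M2c); (M3) is assumed; (M5) is assumed; (M6) holds vacuously since ${\cal T}'_d$ carries no input. The one point requiring a short argument is (M4): with $g$ state-only, forward invariance of $\A$ under jumps means $g(x) \in \A$ for every $x \in \Pi(D) \cap \A$, and this follows from the CLF inequalities, since for such $x$ one has $|x|_\A = 0$, hence $V(x) \le \alpha_2(0) = 0$ by \eqref{eqn:CLFBounds}, $V(g(x)) \le V(x) - \alpha_3(0) = V(x) \le 0$ by \eqref{eqn:CLFJump}, and $0 \le \alpha_1(|g(x)|_\A) \le V(g(x))$ by \eqref{eqn:CLFBounds}; as $\alpha_1 \in \classKinfty$ this forces $|g(x)|_\A = 0$. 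Thus (M4) is automatic, Theorem~\ref{thm:MinNormStabilization} applies, and $\rho_c$ in \eqref{eqn:mc-global} renders $\A$ globally asymptotically stable for $\HS_c$. Continuity of $\rho_c$ on $\Pi(C)$ follows from the continuity part of Theorem~\ref{thm:MinNormStabilization}, whose remaining hypotheses, closed graph of $\Psi_c$ (assumed) and $(\rho_{c,0},\rho_{d,0})(\A)=0$ (true since $\rho_{c,0} \equiv 0$), are met.

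I expect the main obstacle to be purely expository: making the ``trivial input'' reduction airtight, that is, checking that none of the lower-semicontinuity and closed-graph requirements on the discrete-side maps $\Psi_d$, ${\cal T}_d$, ${\cal T}'_d$ in Proposition~\ref{prop:MinNormPracticalStabilization}, Theorem~\ref{thm:MinNormStabilizationOnIr}, and Theorem~\ref{thm:MinNormStabilization} is actually needed beyond the jump-decrease inequality \eqref{eqn:CLFJump}, and that the closed-loop hybrid system still satisfies the hybrid basic conditions after substituting $\rho'_c$ (resp.\ $\rho_c$), so that \cite[Theorem 3.18]{Goebel.ea.11} is applicable on the restricted and unrestricted domains. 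Once these bookkeeping points are settled, the proof is a transcription of the arguments already given for the two theorems with all $u_d$-dependent quantities suppressed.
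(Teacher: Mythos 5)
Your proposal is correct and matches the paper's (unstated, one-line) argument: the corollary is obtained exactly by specializing Proposition~\ref{prop:MinNormPracticalStabilization}, Theorem~\ref{thm:MinNormStabilizationOnIr}, and Theorem~\ref{thm:MinNormStabilization} to the case where the jump data carry no input, so that the jump-side hypotheses become vacuous or follow from \eqref{eqn:CLFJump}. Your explicit verification that (M4) is automatic for a state-only jump map (via $V(x)=0$ on $\Pi(D)\cap\A$, $V(g(x))\le V(x)$, and the lower bound $\alpha_1(|g(x)|_\A)\le V(g(x))$) is a detail the paper leaves implicit, and it is the right justification for why only (M3) and (M5) appear in the corollary's hypotheses.
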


% only jumps

\begin{corollary}
\label{coro:MinNormPracticalStabilizationJump}
Given a compact set $\A \subset \reals^n$ and 
a hybrid system $\HS_d = (C,f,D,g)$ as in \eqref{eqn:HSjumpinput} satisfying the hybrid basic conditions,
suppose there exists a control Lyapunov function $V$
with ${\cal U}$ controls for $\HS_d$.
Furthermore, suppose the following conditions hold:
\begin{list}{}{\itemsep.3cm} 
\item[(M1d)]
The set-valued map $\Psi_d$
is lower semicontinuous
with convex values.
\item[(M2d)]
For every $r>0$ and every $x \in \Pi(D) \cap \Ir$,
the function
$u_d \mapsto \Gamma_d(x,u_d,r)$ is convex on $\Psi_d(x)$.
\end{list}
Then, for every $r>0$, there exists a state-feedback law
\begin{eqnarray}
& & \rho'_d: \Pi(D) \to \U_d
\end{eqnarray}
defined on 
$R_d \cap \Ir$
 as in \eqref{eqn:md-practical2}
that renders the compact set 
$\A_r$ asymptotically stable for $\HS_d$.
Moreover, if 
the set-valued map $\Psi_d$
has a closed graph then $\rho'_d$ is continuous on $\Pi(D) \cap \Ir$.
Furthermore, if 
the zero feedback law  
$\rho_{d,0}:\reals^n \to \{0\} \subset \U_d$
is such that 
condition (M4) holds and if (M6) holds, then $\rho_d$ in \eqref{eqn:md-global}
is globally asymptotically stabilizing.  
Furthermore, 
if the set-valued map $\Psi_d$ 
has closed graph then $\rho_d$ is continuous.
\end{corollary}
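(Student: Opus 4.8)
The plan is to derive Corollary~\ref{coro:MinNormPracticalStabilizationJump} from Theorem~\ref{thm:MinNormStabilizationOnIr} and Theorem~\ref{thm:MinNormStabilization} by viewing $\HS_d$ in \eqref{eqn:HSjumpinput} as the special instance of \eqref{eqn:HS} in which the flow map carries no input. Concretely, I would set $\U_c := \{0\}$ (so $m_c = 1$), identify $\reals^n\times\U_c$ with $\reals^n$, take the flow set to be $C$ (now regarded inside $\reals^n\times\U_c$), and read the flow map $f:\reals^n\to\reals^n$ of \eqref{eqn:HSjumpinput} as $(x,u_c)\mapsto f(x)$. Under this identification $\Psi_c$ is the constant singleton-valued map $x\mapsto\{0\}$ on $\Pi(C)=C$, and the flow inequality \eqref{eqn:CLFFlow} of Definition~\ref{control Lyapunov function definition} becomes $\langle\nabla V(x),f(x)\rangle\leq-\alpha_3(|x|_\A)$ on $C$, which is exactly what the hypothesized CLF with $\U$ controls for $\HS_d$ provides. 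With this recasting, hypotheses (M1) and (M2) of Proposition~\ref{prop:MinNormPracticalStabilization} reduce to (M1d) and (M2d): $\Psi_c$ is a constant singleton map, hence lower semicontinuous with (vacuously) convex values; and $u_c\mapsto\Gamma_c(x,u_c,r)$ is constant in $u_c$, hence convex on $\Psi_c(x)$.

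The practical-stability assertion is then immediate from Theorem~\ref{thm:MinNormStabilizationOnIr}, which for each $r>0$ produces a pair $(\rho'_c,\rho'_d)$ -- with $\rho'_c\equiv 0$ forced by $\U_c=\{0\}$ -- defined on $R_c\cap\Ir$ and $R_d\cap\Ir$ through \eqref{eqn:mc-practical2}--\eqref{eqn:md-practical2} and rendering $\A_r$ asymptotically stable for $\HS_d$. For the continuity claim, note that the constant singleton map $\Psi_c$ has closed graph (its graph is $C\times\{0\}$, closed since $C$ is closed by (A1)), so the joint requirement ``$\Psi_c$ and $\Psi_d$ have closed graph'' collapses to ``$\Psi_d$ has closed graph''; Theorem~\ref{thm:MinNormStabilizationOnIr} then gives continuity of $\rho'_d$ on $R_d\cap\Ir$, which equals $\Pi(D)\cap\Ir$ because $\Ir\subset\defset{x}{V(x)>0}$ for $r>0$.

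For the global-stabilization statement I would apply Theorem~\ref{thm:MinNormStabilization} with the flow-side small-control data $\rho_{c,0}\equiv 0\in\U_c$. The only step that needs an argument rather than bookkeeping is that the flow-side hypotheses (M3) and (M5) -- not listed among the hypotheses of the corollary -- hold automatically. For (M5): with $\U_c=\{0\}$ and $\Gamma_c(x,0,V(x))\leq 0$ on $C$ (the CLF flow inequality), the map ${\cal T}'_c$ in \eqref{eqn:Tprimec} equals the constant singleton $\{0\}$ on $\Pi(C)$, using $S^\circ_c(x,V(x))=\{0\}$ when $V(x)>0$ and $\rho_{c,0}(x)=\{0\}$ when $V(x)=0$ in \eqref{eqn:SGlobal}; a constant singleton map is lower semicontinuous (relative to its domain $\Pi(C)$), which is the sense in which ${\cal T}'_c$ is used in the proof of Theorem~\ref{thm:MinNormStabilization}. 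For (M3): along any solution $t\mapsto\phi(t,0)$ to $\dot x=f(x)$ with $\phi(0,0)\in\A$ one has $\frac{d}{dt}V(\phi(t,0))=\langle\nabla V(\phi(t,0)),f(\phi(t,0))\rangle\leq-\alpha_3(|\phi(t,0)|_\A)\leq 0$ while $V(\phi(t,0))\geq\alpha_1(|\phi(t,0)|_\A)\geq 0$ since $\phi(t,0)\in C$, and \eqref{eqn:CLFBounds} makes $\{x\in C:V(x)=0\}=\A\cap C$, so $V$ starting at $0$ stays at $0$ and $\phi(t,0)\in\A$. Combining these with the hypothesized (M4) and (M6), Theorem~\ref{thm:MinNormStabilization} yields that $\rho_d$ in \eqref{eqn:md-global} renders $\A$ globally asymptotically stable for $\HS_d$; and when $\Psi_d$ has closed graph, since $\Psi_c$ does too and $(\rho_{c,0},\rho_{d,0})(\A)=\{0\}$, the same theorem gives continuity of $\rho_d$.

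In short, the only non-mechanical point is that (M3) and (M5) come for free once the flow dynamics carry no input; the remainder is a transcription of Theorems~\ref{thm:MinNormStabilizationOnIr} and \ref{thm:MinNormStabilization} through the embedding $\U_c=\{0\}$. (Corollary~\ref{coro:MinNormPracticalStabilizationFlow} follows by the mirror-image construction, with $\U_d=\{0\}$ and $\rho_{d,0}\equiv 0$.)
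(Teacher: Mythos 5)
Your proposal is correct and follows essentially the same route as the paper, which states only that the corollaries ``follow by combining the earlier results'': you carry out precisely that combination via the embedding $\U_c=\{0\}$, reducing (M1)--(M2) to (M1d)--(M2d) and invoking Theorem~\ref{thm:MinNormStabilizationOnIr} and Theorem~\ref{thm:MinNormStabilization}. The one substantive detail the paper leaves implicit --- that the flow-side conditions (M3) and (M5) hold automatically when the flow carries no input, because ${\cal T}'_c\equiv\{0\}$ and the CLF inequality forces $V$ to be nonincreasing along flows starting in $\A$ --- is supplied correctly in your argument (with the same understanding of lower semicontinuity relative to $\Pi(C)$ that the paper itself uses in (M1)).
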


\subsection{The common input case}

When the input for flows and jumps are the same, i.e., 
$u := u_c = u_d$ ($m:= m_c = m_d$),
the hybrid system $\HS$ becomes
\begin{eqnarray}\label{eqn:HScommon}
\HS\ \left\{
\begin{array}{llllll}
\dot{x} & = & f(x,u)& \qquad & (x,u) \in C  \\
x^+ & = & g(x,u)& \qquad & (x,u) \in D
\end{array}
\right.
\end{eqnarray}
and
a common pointwise minimum norm control law 
exists when
\begin{equation}\label{eqn:IntersectionTcprimeTdprimeCondition}
{\cal T}'_c(x) \cap {\cal T}'_d(x) \not = \emptyset
\qquad  \forall x \in \Pi(C) \cap \Pi(D) \cap \Ir
\end{equation}
for each $r$.
% (taking value in the appropriate range).
A result paralleling 
%Proposition~\ref{thm:CLFimpliesStabilizabilityGeneral}
%and Theorem~\ref{thm:CLFimpliesStabilizabilityGeneralGlobal}
Theorem~\ref{thm:MinNormStabilization}
follows
using 
%the set-valued map
$$
{\cal T}'(x) \!:=\!
\left\{
\begin{array}{lll}
{\cal T}'_c(x) &\! \mbox{if } x \in \left(\Pi(C)\setminus \Pi(D)\right) \cap \Ir
\\
{\cal T}'_c(x) \cap {\cal T}'_d(x) &\! \mbox{if } x \in \Pi(C)\cap \Pi(D) \cap \Ir
\\
{\cal T}'_d(x) &\! \mbox{if } x \in \left(\Pi(D)\setminus \Pi(C)\right) \cap \Ir
\\
\reals^m &\! \mbox{otherwise},
\end{array}
\right.
$$
which, when further assuming \eqref{eqn:IntersectionTcprimeTdprimeCondition},
is lower semicontinuous and has nonempty, convex values.
(The set valued map $\cal T$ can be defined similarly.)

\begin{corollary}
Given a compact set $\A \subset \reals^n$ and 
a hybrid system $\HS = (C,f,D,g)$ as in 
\eqref{eqn:HScommon}
satisfying 
the hybrid basic conditions,
suppose there exists a control Lyapunov function $V$
with ${\cal U}$ controls for $\HS$ with input $u = u_c = u_d$ ($m = m_c = m_d$).
Suppose that conditions (M1)-(M2) of Proposition~\ref{prop:MinNormPracticalStabilization}
and condition \eqref{eqn:IntersectionTcprimeTdprimeCondition} hold.
Then, for every $r>0$,
there exists a state-feedback law 
\begin{eqnarray}
& & \rho': \Pi(C) \cup \Pi(D) \to \U
\end{eqnarray}
defined on $(\Pi(C) \cup \Pi(D)) \cap \Ir$ as
\begin{eqnarray*}
\rho'(x) &:=& \arg \min \defset{|u|}{ u \in {{\cal T}}(x)}
  \qquad \forall x \in (\Pi(C) \cup \Pi(D)) \cap \Ir
\end{eqnarray*}
%\eqref{eqn:mc-practical2}
%and
%\eqref{eqn:md-practical2}, 
that renders the compact set 
$\A_r$ asymptotically stable for $\HS$.
Moreover, if
the set-valued maps $\Psi_c$ and $\Psi_d$ have closed graph then $\rho'$ is continuous on $(\Pi(C) \cup \Pi(D)) \cap \Ir$.
Furthermore, 
if the zero feedback law 
$\rho_{0}:\reals^n \to \{0\} \subset \U$
is such that 
\eqref{eqn:IntersectionTcprimeTdprimeCondition} and (M3)-(M6) for $r=0$ 
hold,
then the state-feedback law
\begin{eqnarray}
& & \rho: \Pi(C) \cup \Pi(D) \to \U
\end{eqnarray}
defined as
\begin{equation}
\rho(x) := \arg \min \defset{|u|}{ u \in {{\cal T}}'(x)} \qquad \forall x \in \Pi(C) \cup \Pi(D)
\end{equation}
renders the compact set 
$\A$ globally asymptotically stable for $\HS$.
Furthermore, 
if the set-valued maps $\Psi_c$ and $\Psi_d$
have closed graph then $\rho$ is continuous.
\end{corollary}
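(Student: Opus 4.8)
The plan is to mirror, nearly line for line, the arguments already established for Proposition~\ref{prop:MinNormPracticalStabilization} and Theorem~\ref{thm:MinNormStabilization}, replacing the separate maps ${\cal T}_c, {\cal T}_d$ (resp.\ ${\cal T}'_c, {\cal T}'_d$) by the single fused map ${\cal T}$ (resp.\ ${\cal T}'$) defined just above the statement. First I would verify that, under (M1)--(M2) and condition \eqref{eqn:IntersectionTcprimeTdprimeCondition}, the map ${\cal T}$ has nonempty, closed, convex values on $(\Pi(C)\cup\Pi(D))\cap\Ir$: on the three ``interior'' pieces this is immediate from the corresponding properties of ${\cal T}_c$ and ${\cal T}_d$ shown in the proof of Proposition~\ref{prop:MinNormPracticalStabilization} (intersection of two closed convex sets is closed convex; nonemptiness on the overlap is exactly \eqref{eqn:IntersectionTcprimeTdprimeCondition}), and on the complementary set ${\cal T}(x)=\reals^m$ trivially has these properties. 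Next I would argue lower semicontinuity of ${\cal T}$ on $(\Pi(C)\cup\Pi(D))\cap\Ir$: away from the overlap it coincides locally with ${\cal T}_c$ or ${\cal T}_d$ (which are lower semicontinuous by \cite[Proposition 4.4]{FreemanKokotovic96SIAM}), and on $\Pi(C)\cap\Pi(D)\cap\Ir$ one uses that the intersection of two lower semicontinuous maps with convex values, one of which has nonempty interior-type regularity, is lower semicontinuous when the intersection stays nonempty --- this is again \cite[Proposition 4.4]{FreemanKokotovic96SIAM} applied to the pair $({\cal T}_c,{\cal T}_d)$, with \eqref{eqn:IntersectionTcprimeTdprimeCondition} supplying the nonemptiness hypothesis. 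With ${\cal T}$ lower semicontinuous, nonempty, closed, convex valued, the minimal selection $\rho'$ is well defined and single valued, and by construction $\rho'(x)\in{\cal T}(x)\subset{\cal T}_c(x)$ for $x\in\Pi(C)\cap\Ir$ and $\rho'(x)\in{\cal T}_d(x)$ for $x\in\Pi(D)\cap\Ir$; hence $\Upsilon_c(x,\rho'(x))\le 0$ on $\Pi(C)\cap\Ir$ and $\Upsilon_d(x,\rho'(x))\le 0$ on $\Pi(D)\cap\Ir$, which unravels to the decrease inequalities $\langle\nabla V(x),f(x,\rho'(x))\rangle\le-\alpha_3(|x|_\A)$ and $V(g(x,\rho'(x)))-V(x)\le-\alpha_3(|x|_\A)$ on the respective sets. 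Asymptotic stability of $\A_r$ for $\HS$ then follows from \cite[Theorem 3.18]{Goebel.ea.11} exactly as in Theorem~\ref{thm:MinNormStabilizationOnIr}, after the standard extension of $\rho'$ off $\Ir$ that keeps $V$ nonincreasing there.

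For continuity on $(\Pi(C)\cup\Pi(D))\cap\Ir$, the argument of Proposition~\ref{prop:MinNormPracticalStabilization} applies: if $\Psi_c$ and $\Psi_d$ have closed graph, then so do the sublevel-set maps $x\mapsto\{u:\Upsilon_c(x,u)\le0\}$ and $x\mapsto\{u:\Upsilon_d(x,u)\le0\}$ by continuity of $\Upsilon_c,\Upsilon_d$ and closedness of $\U$, hence ${\cal T}_c,{\cal T}_d$ have closed graph, and ${\cal T}$ --- being a finite intersection of closed-graph maps on each of the finitely many regions, patched together on closed sets --- has closed graph; lower semicontinuity plus closed graph plus convex values then yields continuity of the minimal selection via \cite[Proposition 2.19]{FreemanKokotovic96}.

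For the global statement, I would repeat the argument of Theorem~\ref{thm:MinNormStabilization} with ${\cal T}'$ in place of $({\cal T}'_c,{\cal T}'_d)$: assuming \eqref{eqn:IntersectionTcprimeTdprimeCondition} together with (M3)--(M6) at $r=0$ gives lower semicontinuity of ${\cal T}'$ with nonempty closed convex values on all of $\Pi(C)\cup\Pi(D)$ (the $r=0$ slices being handled by the single selection $\rho_0$, which by (M3)--(M4) renders $\A$ forward invariant under both the flow and the jump map); the minimal selection $\rho$ then satisfies the decrease inequalities everywhere on $\Pi(C)$ and $\Pi(D)$, and global asymptotic stability of $\A$ for $\HS$ follows from \cite[Theorem 3.18]{Goebel.ea.11}. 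Continuity of $\rho$ follows as in Theorem~\ref{thm:MinNormStabilization}: on $(\Pi(C)\cup\Pi(D))\setminus\A$ it is the continuity just established; near $\A$, since $\rho_0(\A)=0$, \cite[Theorem~4.5]{Sanfelice.11.TAC.CLF} produces a continuous stabilizing pair vanishing on $\A$, and pointwise norm-minimality of $\rho$ squeezes $|\rho(x)|$ between $0$ and that pair's norm, forcing continuity at points of $\A$.

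The main obstacle I anticipate is the lower semicontinuity of ${\cal T}$ (and ${\cal T}'$) precisely on the stitching set $\Pi(C)\cap\Pi(D)\cap\Ir$ and along its topological boundary relative to $\Pi(C)\cup\Pi(D)$: one must check that taking the intersection ${\cal T}_c(x)\cap{\cal T}_d(x)$ does not destroy inner-limit containment, and this is exactly where condition \eqref{eqn:IntersectionTcprimeTdprimeCondition} is essential --- without a nonempty intersection the inner limit can collapse. The cleanest route is to invoke \cite[Proposition 4.4]{FreemanKokotovic96SIAM} directly on the pair, since that proposition is designed to handle intersections of the relevant form; the remaining care is the patching of the four branches on sets that are closed in $(\Pi(C)\cup\Pi(D))\cap\Ir$, for which lower semicontinuity is a local property and hence unproblematic once each branch is handled.
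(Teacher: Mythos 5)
Your proposal is correct and follows essentially the same route the paper intends: the paper gives no written proof of this corollary beyond defining the fused map ${\cal T}'$ (and noting ${\cal T}$ is defined analogously) and asserting that, under \eqref{eqn:IntersectionTcprimeTdprimeCondition}, it is lower semicontinuous with nonempty convex values, after which everything parallels Proposition~\ref{prop:MinNormPracticalStabilization} and Theorem~\ref{thm:MinNormStabilization}. Your reconstruction matches this plan step for step and is in fact more explicit than the paper about the one delicate point, namely lower semicontinuity of the intersection ${\cal T}_c\cap{\cal T}_d$ on the overlap set, which the paper simply asserts.
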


\section{Examples}
\label{sec:Examples}

Now, we present examples illustrating some of the
results in the previous sections. Complete details
are presented for the first example.
%More examples and complete details are available in the long
%version of this submission at
%\cite{Sanfelice.13.TR}.

%%%%%%%%%%%%%%%%%%%%%%%%%%%%%%%%%%%%%%%%%%%%%  EXAMPLE 1 %%%%%%%%%%%%%%%%%%%%%%%%%%%%%%%%%%%%%%%%%%%%
\begin{example}[Rotate and dissipate]
\label{ex:2}
Given $v_1, v_2 \in \reals^2$,
let
%\begin{eqnarray}\non
${\cal W}(v_1,v_2) :=
\{\xi \in \reals^2\ : \ 
\xi = r (\lambda v_1 + (1-\lambda) v_2), r \geq 0 , \lambda \in [0,1]
\}
$
%\end{eqnarray}
and define
$v^1_{1} = [1\ 1]^\top$, $v^1_{2} = [-1\ 1]^\top$,
$v^2_{1} = [1\ -1]^\top$, $v^2_{2} = [-1\ -1]^\top$.
Let $\omega > 0$ and consider the hybrid system
\begin{eqnarray}\label{eqn:HSex2}
\HS\ \left\{
%\begin{array}{l}
\begin{array}{llll}
\dot{x} &= & f(x,u_c) :=
u_c
\left[
\begin{array}{cc}
0 & \omega \\
-\omega & 0
\end{array}
\right]
x
%\end{array}
&
\qquad
(x,u_c) \in C,
\\
x^+ 
&
=
&
g(x,u_d)
& \hspace{-0.85in}
(x,u_d) \in D,
\end{array}
\right.
\end{eqnarray}
\begin{eqnarray}\nonumber
C &:=& \defset{(x,u_c) \in \reals^2\times\reals}{u_c \in \{-1,1\}, x \in \widehat{C}},\\ \nonumber
\widehat{C} &:=& \overline{\reals^2 \setminus ({\cal W}(v^1_1,v^1_2) \cup {\cal W}(v^2_1,v^2_2))},\\
\nonumber
D & := & \defset{(x,u_d) \in \reals^2 \times \realsgeq}{u_d \geq \gamma |x|, x \in
\partial {{\cal W}(v^2_1,v^2_2)}},
\end{eqnarray}
for each $(x,u_d) \in \reals^2\times\realsgeq$ the jump map $g$ is given by
\begin{eqnarray}\nonumber
g(x,u_d) &:=& R(\pi/4) \matt{0 \\ u_d},
%g(x,u_d) &:=& \left\{ R(\pi/4) \matt{0 \\ u_d} ,R(-\pi/4) \matt{0 \\ u_d}\right\},
\quad R(s) = \matt{\cos s & \sin s\\ -\sin s & \cos s},
\end{eqnarray}
and $\gamma > 0$
is such that
$\exp(\pi/(2\omega)) \gamma^2<1$.
For each $i \in \{1,2\}$, the vectors $v^i_1, v^i_2 \in \reals^2$ are 
such that 
${\cal W}(v^1_1,v^1_2) \cap{\cal W}(v^2_1,v^2_2) = \{0\}$.
The set of interest is 
$\A := \{0\} \subset \reals^2$.
Figure~\ref{fig:Ex2FlowJumpSets}
depicts the flow and jump sets projected onto the $x$ plane.
\begin{figure}[h!]  
\begin{center}  
\psfrag{C}[][][0.9]{$C$}
\psfrag{D}[][][0.9]{$D$}
\psfrag{x1}[][][0.9]{\ \ $x_1$}
\psfrag{x2}[][][0.9]{\!\! $x_2$}
\psfrag{W(v11,v21)}[][][0.9]{\ \ \ ${\cal W}(v^1_1,v^1_2)$}
\psfrag{W(v12,v22)}[][][0.9]{\ \ \ ${\cal W}(v^2_1,v^2_2)$}
{\includegraphics[width=.25\textwidth]{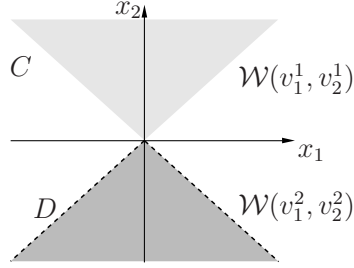}}  
\end{center}  
\caption{Sets for Example~\ref{ex:2}. 
The white region (and its boundary) corresponds to the flow set projected onto the $x$ plane. The dashed line represents $D$.}
\label{fig:Ex2FlowJumpSets}
\end{figure}

To construct a state-feedback law for \eqref{eqn:HSex2},
consider the candidate control Lyapunov function $V$ given by
\begin{equation}\label{eqn:Vex2}
V(x) = \exp(T(x)) x^\top x \qquad \forall x \in \reals^2,
\end{equation}
where $T$ 
denotes the minimum time to reach the 
set ${\cal W}(v^2_1,v^2_2)$ with the continuous dynamics of \eqref{eqn:HSex2} 
and $u_c \in \{-1,1\}$.
The function $T$ is precisely defined as follows.
It is defined as a continuously differentiable function from $\reals^2$ to $[0,\frac{\pi}{2\omega}]$
given as
$T(x) := \frac{1}{\omega}\arcsin\left(\frac{\sqrt{2}}{2}\frac{|x_1|+x_2}{|x|} \right)$
on $\widehat{C}$
and zero for every other point in ${\cal W}(v^2_1,v^2_2)$.
The definition of $V$ is such that 
\eqref{eqn:CLFBounds}
holds with
$\alpha_1(s) := s^2$ and $\alpha_2(s) := \exp\left(\frac{\pi}{2\omega}\right) s^2$ for each $s \geq 0$.

Next, we construct the set-valued maps $\Psi_c$ and $\Psi_d$ and then check \eqref{eqn:CLFFlow} and \eqref{eqn:CLFJump}.
Note that $\Pi(C) = \widehat{C}$
and $\Pi(D) = \partial{{\cal W}(v^2_1,v^2_2)}$.
For each $x \in \reals^2$,
\begin{eqnarray}\non
\Psi_c(x) &=& 
\left\{
\begin{array}{ll}
\{-1,1\} &  \mbox{ if } x \in \widehat{C} \\
\emptyset & \mbox{ otherwise,}
\end{array}
\right.\\ \nonumber
\Psi_d(x) &=& 
\left\{
\begin{array}{ll}
\defset{u_d \in \realsgeq }{u_d \geq \gamma |x|}  & 
\mbox{ if } x \in \partial{{\cal W}(v^2_1,v^2_2),} \\
\emptyset & \mbox{ otherwise}.
\end{array}
\right.
\end{eqnarray}
During flows, we have that
\begin{eqnarray*}
\langle 
\nabla V(x), f(x,u_c)
\rangle
& = & 
\langle 
\nabla T(x), f(x,u_c)
\rangle
V(x)\\
& = &
 \frac{\sqrt{2}}{2\omega}\frac{1}{\sqrt{1 - \frac{1}{2}\left(\frac{|x_1|+x_2}{|x|}\right)^2}}
\left\langle \nabla \frac{|x_1|+x_2}{|x|}, f(x,u_c)\right\rangle V(x) \\
&=&  \frac{u_c}{\omega} \matt{\frac{x_2}{|x|^2} & -\frac{x_1}{|x|^2}}
\left[
\begin{array}{cc}
0 & \omega \\
-\omega & 0
\end{array}
\right]
x 
\,
V(x)
\end{eqnarray*}
for all $(x,u_c) \in C$.  
For $x \in \widehat{C}$, $x_1 > 0$,
$\langle 
\nabla T(x), f(x,u_c)
\rangle= 1$
when $u_c = 1$,
and for $x \in \widehat{C}$, $x_1 < 0$,
$\langle 
\nabla T(x), f(x,u_c)
\rangle=-1$
when $u_c = -1$.
Then
\begin{equation}\label{eqn:FlowDecrease}
\inf_{u_c \in \Psi_c(x)}
\langle 
\nabla V(x), f(x,u_c)
\rangle
 \leq - x^\top x
\end{equation}
 for all 
$x \in \Pi(C)$.
During jumps, we have that, for each $(x,u_d) \in D$,
\begin{eqnarray*}
V(g(x,u_d))  &=&  \exp(T(g(x,u_d))) g(x,u_d)^\top g(x,u_d)\\
& =&  \exp\left(\frac{\pi}{2\omega}\right) u_d^2.
\end{eqnarray*}
It follows that
\begin{eqnarray*}
\non
\inf_{u_d \in \Psi_d(x)} V(g(x,u_d)) - V(x) 
& \leq &
\inf_{u_d \in \Psi_d(x)} \exp\left(\frac{\pi}{2\omega}\right) u_d^2 -  
\exp(T(x)) x^\top x \\ 
& \leq & 
-\left(1- \exp\left(\frac{\pi}{2\omega}\right)\gamma^2\right) x^\top x
\end{eqnarray*}
for each $x \in \Pi(D)$.
Finally, both \eqref{eqn:CLFFlow} and \eqref{eqn:CLFJump} hold
with $s \mapsto \alpha_3(s) := \left(1- \exp\left(\frac{\pi}{2\omega}\right)\gamma^2\right) s^2$.
Then, $V$ is a CLF for \eqref{eqn:HSex2}.

% now select a control law
Now, we determine an asymptotic stabilizing control law for the above hybrid system.
First, we compute the set-valued map ${\cal T}_c$ in \eqref{eqn:calTcAndd}.
To this end, 
the definition of $\Gamma_c$ gives, for each $r \geq 0$,
\begin{eqnarray*}
\Gamma_c(x,u_c,r) \! = \!
\left\{
\begin{array}{ll}\displaystyle
 \frac{u_c}{\omega} \matt{\frac{x_2}{|x|^2} & -\frac{x_1}{|x|^2}}
\left[
\begin{array}{cc}
0 & \omega \\
-\omega & 0
\end{array}
\right]
x 
\,
V(x)
+
\alpha_3(|x|_{\A})
 & 
  \mbox{ if }  (x,u_c) \in C \cap (\Ir\times \reals^{m_c}),\\
-\infty &  \mbox{ otherwise }
\end{array}
\right.
\end{eqnarray*}
from where we get $\Upsilon_c(x,u_c) = \Gamma_c(x,u_c,V(x))$.
Then, 
for each 
$r > 0$ and
$(x,u_c) \in C \cap \left(\Ir \times \reals^{m_c} \right)$,
the set-valued map ${\cal T}_c$ is given by
\begin{eqnarray*}
{\cal T}_c(x) & =  & \Psi_c(x) \cap
\defset{u_c \in \U_c}{ \Upsilon_c(x,u_c) \leq 0} \\
 & = & \{-1,1\} \cap 
\left(
\defset{1}{x_1 > 0}
\cup
\defset{-1}{x_1 < 0}
\right),
\end{eqnarray*}
which reduces to
\begin{eqnarray}\label{eqn:ScMinNormSelection-ex2}
{\cal T}_c(x) = 
\left\{
\begin{array}{ll}
1 & x_1 > 0 \\
-1 & x_1 < 0
\end{array}
\right.
%\Psi_c(x) \cap \defset{u_c \in \U_c}{\Gamma_c(x,u_c)\ \leq\ 0}
\end{eqnarray}
for each $x \in \Pi(C) \cap \defset{x \in \reals^2}{V(x) > 0}$.

Proceeding in the same way,
the definition of $\Gamma_d$ gives, for each $r \geq 0$,
\begin{eqnarray*}
\Gamma_d(x,u_d,r) & = & 
\left\{
\begin{array}{ll}\displaystyle
\exp\left(\frac{\pi}{2\omega}\right) u_d^2 -
 V(x) + {\alpha}_3(|x|_\A) 
 & 
  \mbox{ if } 
    (x,u_d)\in  D \cap (\Ir\times \reals^{m_d}),\\
    \\
-\infty & \mbox{ otherwise}
\end{array}
\right.
\end{eqnarray*}
from where we get $\Upsilon_d(x,u_c) = \Gamma_d(x,u_d,V(x))$.
Then, 
for each 
$r > 0$ and
$(x,u_d) \in D \cap \left(\Ir \times \reals^{m_d} \right)$,
the set-valued map ${\cal T}_d$ is given by
\begin{eqnarray*}
{\cal T}_d(x) & =  & 
\Psi_d(x) \cap \defset{u_d \in \U_d}{\Upsilon_d(x,u_d) \leq 0} \\
& = & 
\defset{u_d \in \realsgeq }{u_d \geq \gamma |x|} \cap 
\defset{u_d \in \realsgeq}{\exp\left(\frac{\pi}{2\omega}\right) u_d^2 -
\exp(T(x)) x^\top x  + {\alpha}_3(|x|_\A) \leq 0}\\
& = & 
\defset{u_d \in \realsgeq }{u_d \geq \gamma |x|}  \cap 
\defset{u_d \in \realsgeq}{\exp\left(\frac{\pi}{2\omega}\right) u_d^2 -
 x^\top x  + {\alpha}_3(|x|_\A) \leq 0}
\end{eqnarray*}
and using the definition of $\alpha_3$, we get
\begin{eqnarray}\non
{\cal T}_d(x)  &=& 
\defset{u_d \in \realsgeq }{u_d \geq \gamma |x|} \cap 
\defset{u_d \in \realsgeq}{\exp\left(\frac{\pi}{2\omega}\right) u_d^2 
- \exp\left(\frac{\pi}{2\omega}\right)\gamma^2 |x|^2
 \leq 0}
\\ \non
& =  &
\defset{u_d \in \realsgeq }{u_d \geq \gamma |x|} \cap 
\defset{u_d \in \realsgeq}{-\gamma |x|\leq u_d \leq \gamma |x|}
\\
\label{eqn:SdMinNormSelection-ex2}
& = &
\defset{u_d \in \realsgeq}{u_d = \gamma |x|}
\end{eqnarray}
for each $x \in \Pi(D) \cap \defset{x \in \reals^2}{V(x) > 0}$.
Then, according to \eqref{eqn:mc}, from \eqref{eqn:ScMinNormSelection-ex2},
for each $x \in \Pi(C) \cap \defset{x \in \reals^2}{V(x) > 0}$ 
we can take the pointwise minimum norm control selection
$$
\rho_c(x) := \left\{
\begin{array}{ll}
1 & x_1 > 0 \\
-1 & x_1 < 0
\end{array}
\right.
$$
According to \eqref{eqn:md}, from \eqref{eqn:SdMinNormSelection-ex2},
for each 
$x \in \Pi(D) \cap \defset{x \in \reals^2}{V(x) > 0}$
we can take the pointwise minimum norm control selection
$$
\rho_d(x) := \gamma |x|.
$$
Figure~\ref{fig:Sim1-ex2} depicts a closed-loop trajectory with 
the control selections above when the region of operation
is restricted to $\defset{x \in \reals^2}{V(x) \geq r}$,
$r = 0.15$.
\begin{figure}[h!]  
\begin{center}  
\psfrag{C}[][][0.9]{$C$}
\psfrag{D}[][][0.9]{$D$}
\psfrag{y=-x}[][][0.9]{}
\psfrag{x1}[][][0.9]{\ \ $x_1$}
\psfrag{x2}[][][0.9][-90]{\!\! $x_2$}
\psfrag{W(v11,v21)}[][][0.9]{\ \ \ ${\cal W}(v^1_1,v^1_2)$}
\psfrag{W(v12,v22)}[][][0.9]{\ \ \ ${\cal W}(v^2_1,v^2_2)$}
{\includegraphics[width=.6\textwidth]{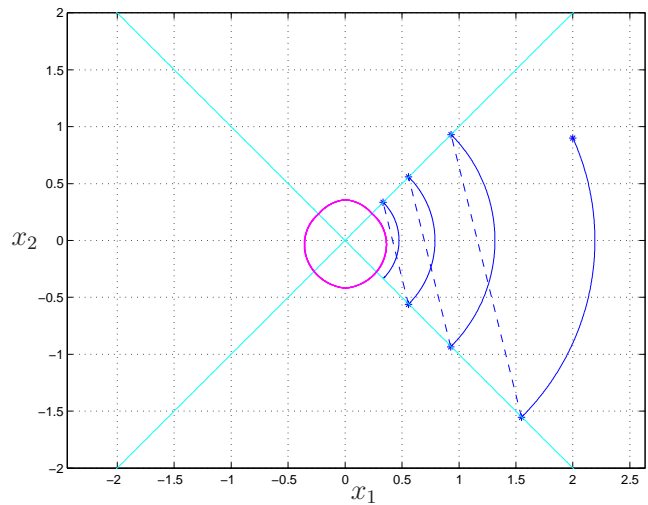}}  
\end{center}  
\caption{Closed-loop trajectory to the system in Example~\ref{ex:2} starting from $x(0,0) = (2,0.9)$ and evolving within $\defset{x \in \reals^2}{V(x) \geq r}$, $r = 0.15$. 
The lines at $\pm 45 \mbox{ deg}$ define the boundary of the flow and jump sets projected onto the $x$ plane.
The $r$-contour plot of $V$ is also shown.}
\label{fig:Sim1-ex2}
\end{figure}
\end{example}

\begin{example}[Impact control of a pendulum]
\label{ex:1}
Consider a point-mass pendulum impacting on a controlled slanted surface.
Denote the pendulum's angle (with respect to the vertical) by $x_1$ and 
the pendulum's velocity (positive when the pendulum rotates in the clockwise direction) by $x_2$.
When $x_1 \geq \mu$ with $\mu$ denoting the angle of the surface,
its continuous evolution is given by
\begin{eqnarray*}
\dot{x}_1 = x_2, \quad 
\dot{x}_2 = -a \sin x_1 - b x_2 + \tau,
\end{eqnarray*}
where $a > 0$, $b \geq 0$ capture the system constants (e.g., gravity, mass, length, and friction) and
$\tau$ corresponds to torque actuation at the pendulum's end.
For simplicity, we assume that $x_1 \in [-\frac{\pi}{2},\pi]$ 
and $\mu \in [-\frac{\pi}{2},0]$.
Impacts between the pendulum and the surface
occur when
\begin{equation}\label{eqn:ImpactSetPendulum}
x_1 \leq\  \mu, \quad x_2 \leq 0.
\end{equation}
At such events, the jump map takes the form
\begin{eqnarray*}
x_1^+ = x_1 + \widetilde{\rho}(\mu) x_1,\qquad
x_2^+ = - e(\mu) x_2,
\end{eqnarray*}
where the functions $\widetilde{\rho}:[-\pi/2,0]\to(-1,0)$ and $e:[-\pi/2,0]\to[0,1)$ are continuous
and capture the effect of pendulum compression and restitution at impacts, respectively,
as a function of $\mu$.
The function $\widetilde{\rho}$ captures rapid displacements of the pendulum at collisions
while $e$ models the effect of the angle $\mu$ on energy dissipation at impacts.
For a vertical surface ($\mu = 0$), $\widetilde{\rho}$ is chosen such that $\widetilde{\rho}(0) \in (-1,0)$
and $e$ is chosen to satisfy
$e(0) = e_0$, where $e_0 \in (0,1)$ is the nominal 
(no gravity effect) restitution coefficient.
For slanted surfaces ($\mu \in [-\frac{\pi}{2},0)$),
when conditions
\eqref{eqn:ImpactSetPendulum} hold,
$\widetilde{\rho}$ is chosen as $x_1 + \widetilde{\rho}(\mu) x_1 > x_1$, $\widetilde{\rho}(\mu) \in (-1,0)$, 
so that, after
the impacts, the pendulum is pushed away from the contact condition,
while the function $e$ is chosen as a nondecreasing function of $\mu$ satisfying $e_0 \leq e(\mu) < 1$ at such angles
so that, due to the effect of the gravity force at impacts, less energy is
dissipated as $|\mu|$ increases.

The model above can be captured by the hybrid system $\HS$ given by
\begin{eqnarray}\label{eqn:HSex1}
\HS\ \left\{
\begin{array}{l}
\left.
\begin{array}{llllll}
\dot{x}_1 & = & x_2 \\
\dot{x}_2 & = & -a \sin x_1 - b x_2 + u_{c,1}
\end{array}
\right\} =: f(x,u_c) \\
\hspace{2in}
\qquad
(x,u_c) \in C,
\\
\left.
\begin{array}{llllll}
x_1^+ & = & x_1 + \widetilde{\rho}(u_{d}) x_1 \\
x_2^+ & = & - e(u_d) x_2
\end{array}\ 
\right\} =: g(x,u_d) \\
\hspace{2in}
\qquad
(x,u_d) \in D,
\end{array}
\right.
\end{eqnarray}
where 
$u_c = [u_{c,1}\ u_{c,2}]^\top = [\tau\ \mu]^\top \in \reals\times[-\frac{\pi}{2},0]=: \U_c$,
$u_d = \mu \in [-\frac{\pi}{2},0] =: \U_d$,
$$
C := \defset{(x,u_c) \in  \left[-\frac{\pi}{2},\pi\right]  \times \reals \times\U_c}{x_1\geq u_{c,2}},$$
$$D := \defset{(x,u_d) \in \left[-\frac{\pi}{2},\pi\right]  \times \reals\times\U_d}{x_1 \leq  u_d, x_2 \leq 0}.$$
Note that the definitions of $C$ and $D$ impose state constraints on the inputs.

Let $\A = \{(0,0)\}$ and consider the candidate control Lyapunov function 
with $\U$ controls for $\HS$ given by
\begin{equation}\label{eqn:CLFpendulum}
V(x) = x^\top P x, \qquad P = \matt{2 & 1 \\ 1 & 1}.
\end{equation}
During flows, we have that
\begin{eqnarray}\non
\langle 
\nabla V(x), f(x,u_c)
\rangle
& = &
4 x_1 x_2 + 2 x_2^2 \\
\non & & \hspace{-0.5in}
+
2 (-a \sin x_1 - b x_2 + u_{c,1}) (x_2 + x_1)
\end{eqnarray}
for all $(x,u_c) \in C$.  It follows that \eqref{eqn:CLFFlow}
is satisfied with $\alpha_3$ defined as $\alpha_3(s) := s^2$ for all $s \geq 0$. 
In fact, 
note that, for each $x \in \reals^2$,
\begin{equation}\non
\Psi_c(x) = 
\left\{
\begin{array}{ll}
\defset{u_c }{x_1 \geq u_{c,2}} = \reals \times [-\frac{\pi}{2},\min\left\{x_{1},0\right\}] & x_1 \in [-\frac{\pi}{2},\pi] \\
\emptyset & x_1 \not \in [-\frac{\pi}{2},\pi].
\end{array}
\right.
\end{equation}
and that
$\Pi(C) = [-\frac{\pi}{2},\pi] \times \reals$.
Then
\begin{equation}\non
\inf_{u_c \in \Psi_c(x)}
\langle 
\nabla V(x), f(x,u_c)
\rangle
 = - x^\top x
\end{equation}
 for all 
$x \in \Pi(C)$ such that $x_1 + x_2 = 0$,
while when $x_1 + x_2 \not= 0$, we have
$$\inf_{u_c \in \Psi_c(x)}
\langle 
\nabla V(x), f(x,u_c)
\rangle = -\infty.
$$
Note that, for each $x\in\reals^2$, we have 
$$
\Psi_d(x) = 
\left\{
\begin{array}{ll}
\defset{u_d }{x_1 \leq  u_{d}} = [x_1,0]\  & x_1 \in [-\frac{\pi}{2},0], x_2 \leq 0 \\
\emptyset & \mbox{ otherwise },
\end{array}
\right.
$$
and that $\Pi(D) = [-\frac{\pi}{2},0]\times (-\infty,0]$.
%Assume that
%%, for each $u_d \in \Psi(x)$, $x \in \Pi(D)$, 
%$\widetilde{\rho}$  and $e$ satisfy
%\begin{eqnarray*}
%\widetilde{\rho}(u_d), e(u_d) \in (0,1)
%\qquad 
%\forall u_d \in \Psi_d(x), x \in \Pi(D). 
%\end{eqnarray*}
Then, during jumps, we have
\begin{eqnarray*}
\inf_{u_d \in \Psi_d(x)}
V(g(x,u_d))  -  V(x) \!\! & = \!\! &  V(g(x,x_1))  -  V(x)\\
%(e_0+e_1)^2 x_2^2 - V(x)\\
&   & 
\hspace{-1.2in} 
\leq
-\min\{
2 (1- (1+\widetilde{\rho}(x_1))^2),
1-e^2(x_1)
\}
x^\top x
\end{eqnarray*}
for all $x \in \Pi(D)$. Then, 
%since $e_0 + e_1 \in [0,\frac{1}{2})$,
condition
\eqref{eqn:CLFJump} is  satisfied
with 
$\alpha_3$ defined as $\alpha_3(s) := \lambda s^2$ for all $s \geq 0$,
$\lambda := \min_{x_1 \in [-\frac{\pi}{2},0]}\{
2 (1- (1+\widetilde{\rho}(x_1))^2),
1-e^2(x_1)
\}$.
It follows that both \eqref{eqn:CLFFlow} and \eqref{eqn:CLFJump} hold
with this choice of $\alpha_3$.

The definition of $\Gamma_c$ gives, for each $r \geq 0$,
\begin{eqnarray*}
\Gamma_c(x,u_c,r) & = & 
\left\{
\begin{array}{ll}\displaystyle
4 x_1 x_2 + 2 x_2^2
+
2 (-a \sin x_1 - b x_2 + u_{c,1}) (x_2 + x_1)
+
\alpha_3(|x|_{\A}) & \\
 & 
\hspace{-1.35in}  \mbox{ if }  (x,u_c) \in C \cap (\Ir\times \reals^{m_c})\\
-\infty &  \mbox{ otherwise }
\end{array}
\right.
\end{eqnarray*}
from where we get $\Upsilon_c(x,u_c) = \Gamma_c(x,u_c,V(x))$.
Then,
for each 
$r > 0$ and
$(x,u_c) \in C \cap \left(\Ir \times \reals^{m_c} \right)$,
the set-valued map ${\cal T}_c$ is given by
\begin{eqnarray}\non
{\cal T}_c(x) &=& \Psi_c(x) \cap
\defset{u_c \in \U_c}{ \Upsilon_c(x,u_c) \leq 0} \\
\non
&=&
\left(\reals \times \left[-\frac{\pi}{2},\min\left\{x_{1},0\right\}\right] \right) \\
\non
& & \hspace{0.3in} \cap
\defset{u_c \in \U_c}{4 x_1 x_2 + 2 x_2^2+2 (-a \sin x_1 - b x_2 + u_{c,1}) (x_2 + x_1)+\alpha_3(|x|_{\A}) \leq 0} \\ \non
&  & \hspace{-0.6in} = 
\defset{u_c \in  \reals \times \left[-\frac{\pi}{2},\min\{x_1,0\}\right]}{4x_1 x_2 + 2 x_2^2 
+ 2(-a \sin x_1 - b x_2 + u_{c,1})  (x_2 + x_1) + \lambda x^\top x \leq 0}
\\
\label{eqn:ScMinNormSelection-ex1}
%\Psi_c(x) \cap \defset{u_c \in \U_c}{\Gamma_c(x,u_c)\ \leq\ 0}
\end{eqnarray}
for each $x \in \Pi(C) \cap \defset{x \in \reals^2}{V(x) > 0}$.
%\IfConf{The constant $\lambda$ is defined as
%$\lambda := \min_{x_1 \in [-\frac{\pi}{2},0]}\{
%2 (1- (1+\widetilde{\rho}(x_1))^2),
%1-e^2(x_1)
%\}$.
%}{}
Proceeding in the same way,
the definition of $\Gamma_d$ gives, for each $r \geq 0$,
\begin{eqnarray*}
\Gamma_d(x,u_d,r) & = & 
\left\{
\begin{array}{ll}\displaystyle
-2 x_1^2 (1-(1+\widetilde{\rho}(u_d))^2) - x_2^2 (1-e^2(u_d)) - 2 x_1 x_2 (2+\widetilde{\rho}(u_d)) e(u_d)
+ {\alpha}_3(|x|_\A) 
 &  \\
 & \hspace{-2.2in}
  \mbox{ if } 
    (x,u_d)\in  D \cap (\Ir\times \reals^{m_d})\\
-\infty & \hspace{-2.2in} \mbox{ otherwise}
\end{array}
\right.
\end{eqnarray*}
from where we get $\Upsilon_d(x,u_c) = \Gamma_d(x,u_d,V(x))$.
Then,
for each 
$r > 0$ and
$(x,u_d) \in D \cap \left(\Ir \times \reals^{m_d} \right)$,
the set-valued map ${\cal T}_d$ is given by
\begin{eqnarray}\non
{\cal T}_d(x) & =  & 
\Psi_d(x) \cap \defset{u_d \in \U_d}{\Upsilon_d(x,u_d) \leq 0} \\
\non
& = & 
%\defset{u_d \in \left[-\frac{\pi}{2},0\right]}{x_1 = u_d}\\
\defset{u_{d} \in \left[-\frac{\pi}{2},0\right] }{ u_{d} \in [x_1,0] }
\\
\non
& & \hspace{-1in} \cap
\defset{u_d \in \reals}{-2 x_1^2 (1-(1+\widetilde{\rho}(u_d))^2) - x_2^2 (1-e^2(u_d)) - 2 x_1 x_2 (2+\widetilde{\rho}(u_d)) e(u_d) + \lambda x^\top x \leq 0}\\
\label{eqn:SdMinNormSelection-ex1}
& = & 
\defset{u_d \in \left[-\frac{\pi}{2},0\right]}{-2 x_1^2 (1-(1+\widetilde{\rho}(u_d))^2) 
- x_2^2 (1-e^2(u_d)) + \lambda x^\top x \leq 0}
%\\
%\label{eqn:SdMinNormSelection-ex1}
%& = & \defset{u_d \in \left[-\frac{\pi}{2},0\right]}{x_1 {\color{red}\ \leq\ } u_d}\ =\ 
%{\color{red}[x_1,0]}
\end{eqnarray}
where we dropped the term
$- 2 x_1 x_2 (2+\widetilde{\rho}(u_d)) e(u_d)$
since on $D$ we have that $x_1 x_2 \geq 0$.

% now select a control law
Defining
$\psi_0(x) :=  4x_1 x_2 + 2 x_2^2 + 2(-a \sin x_1 - b x_2 )  (x_2 + x_1) + \lambda x^\top x$, 
and $\psi_1(x) := 2 (x_1 + x_2)$,
the
\eqref{eqn:ScMinNormSelection-ex1}
can be rewritten as
\begin{eqnarray}\non
{\cal T}_c(x) &=&  
\defset{u_c \in  \reals \times \left[-\frac{\pi}{2},\min\{x_1,0\}\right]}{\psi_0(x) + \psi_1(x) u_{c,1} \leq 0}
\end{eqnarray}
for each $x \in \Pi(C) \cap \defset{x \in \reals^2}{V(x) > 0}$.
To determine the pointwise  minimum norm control selection according to \eqref{eqn:mc},
note that, when $\psi_0(x) \leq 0$, then the pointwise  minimum norm control selection is $u_{c,1}=0$ 
and that, when $\psi_0(x) > 0$, is given by 
$$
-\frac{\psi_0(x) \psi_1(x)}{\psi_1^2(x)} = -\frac{\psi_0(x)}{\psi_1(x)}
$$
which leads to $\psi_0(x) + \psi_1(x) u_{c,1} = 0$.
Then,
the pointwise  minimum norm control selection is given by
$$
\rho_{c,1}(x) := 
\left\{
\begin{array}{ll}
-\frac{\psi_0(x)}{\psi_1(x)} & \psi_0(x) > 0 \\
0 &  \psi_0(x) \leq 0
\end{array}
\right.
\qquad 
\rho_{c,2}(x) :=  0
$$
on $\Pi(C) \cap \defset{x \in \reals^2}{V(x) > 0}$ (see \cite[Chapter 4]{FreemanKokotovic96}).
%where
%$$
%\psi_0(x) = 4 x_1 x_2 + 2 x_2^2 + 2(-a \sin x_1 - b x_2) (x_1 + x_2), \qquad
%\psi_1(x) = 2 (x_1 + x_2).
%$$
According to \eqref{eqn:md}, from \eqref{eqn:SdMinNormSelection-ex1}, since $\widetilde{\rho}$ maps to $(-1,0)$ and $e$ to $(0,1)$,
for each 
$x \in \Pi(D) \cap \defset{x \in \reals^2}{V(x) > 0}$,
the pointwise  minimum norm control selection is given by
$$
\rho_d(x) :=  0.
$$
Since $\rho_{c,2}  = \rho_d$, the selection above can be implemented.

Figure~\ref{fig:Sim1-ex1Planar} depicts a closed-loop trajectory on the plane with
the control selections above when the region of operation
is restricted to $\defset{x \in \reals^2}{V(x) \geq r}$,
$r = 0.0015$. 
Figure~\ref{fig:Sim1-ex1PositionAndVelocity}
shows the position and velocity trajectories projected on the $t$ axis.
The functions $\widetilde{\rho}$ and $e$ used in the simulations
are defined as
$\widetilde{\rho}(s) = 0.5 s - 0.1$
and 
$e(s) = -0.28 s + 0.5$
for each $s \in [-\pi/2,0]$.

\begin{figure}[h!]
\begin{center}
\psfrag{C}[][][0.9]{$C$}
\psfrag{D}[][][0.9]{$D$}
\psfrag{y=-x}[][][0.9]{}
\psfrag{x1}[][][0.9]{\ \ $x_1$}
\psfrag{x2}[][][0.9][-90]{\!\! $x_2$}
\psfrag{W(v11,v21)}[][][0.9]{\ \ \ ${\cal W}(v^1_1,v^1_2)$}
\psfrag{W(v12,v22)}[][][0.9]{\ \ \ ${\cal W}(v^2_1,v^2_2)$}
{\includegraphics[width=.6\textwidth]{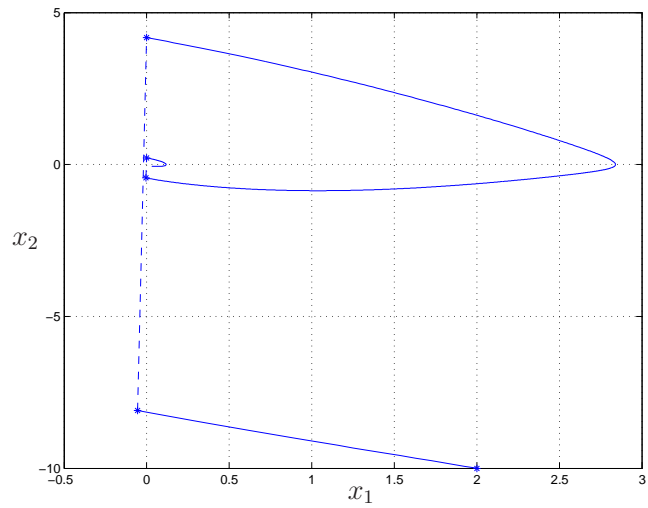}}  
\end{center}  
\caption{Closed-loop trajectory to the system in Example~\ref{ex:1} 
on the plane starting from $x(0,0) = (2,-10)$ and evolving within $\defset{x \in \reals^2}{V(x) \geq r}$, 
$r = 0.0015$.}
\label{fig:Sim1-ex1Planar}
\end{figure}

\begin{figure}[h!]  
\begin{center}  
\psfrag{C}[][][0.9]{$C$}
\psfrag{D}[][][0.9]{$D$}
\psfrag{flows [t]}[][][0.7]{$t [sec]$}
\psfrag{x1}[][][0.9][-90]{\ \ $x_1$}
\psfrag{x2}[][][0.9][-90]{\!\! $x_2$}
\psfrag{W(v11,v21)}[][][0.9]{\ \ \ ${\cal W}(v^1_1,v^1_2)$}
\psfrag{W(v12,v22)}[][][0.9]{\ \ \ ${\cal W}(v^2_1,v^2_2)$}
{\includegraphics[width=.6\textwidth]{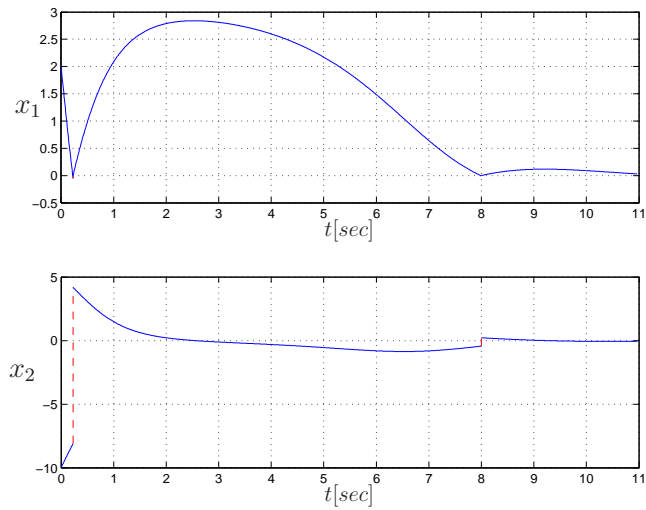}}  
\end{center}  
\caption{Closed-loop position ($x_1$) 
and velocity ($x_2$) to the system in Example~\ref{ex:1} starting from $x(0,0) = (2,-10)$ and evolving within $\defset{x \in \reals^2}{V(x) \geq r}$, $r = 0.0015$.}
\label{fig:Sim1-ex1PositionAndVelocity}
\end{figure}
\end{example}

\begin{example}[Desynchronization of coupled timers with controlled resets]
\label{ex:3}
% FF example
Consider the hybrid system 
with state
$$
x:= \matt{ \ton \\ \ttw } \in P := [0,\tb] \times [0,\tb],
$$
with 
 $x_{1}, x_2$ being timer states with threshold $\tb >0$.
 The state $x$
evolves continuously according to the flow map
$$
f(x) := \matt{ 1 \\ 1}
$$
when
\begin{equation}
x \in C := P
\label{eqn:flowset}
\end{equation}
The state $x$ jumps when any of the timers expires.
Defining inputs affecting the jumps by 
$u_d = (u_{d,1},u_{d,2}) \in P$,
jumps will be triggered when
\begin{equation}
(x,u_d) \in D := \{ (x,u_d) \in P \times P: \mbox{max}\{\ton,\ttw\} = \tb \} . 
\label{eqn:D}
\end{equation}
At jumps, if a timer $x_i$ reached the threshold $\tb$,
then it
gets reset to the value of the respective input component of $u_{d,i}$,
while if $x_j$, $j \not = i$, did not reach the threshold
then it gets reduced by a fraction of its value.
More precisely, the jump map is given by
$$
g(x,u_d) = \left[ \begin{array}{c} g(x_1,x_2,u_{d,1}) \\ g(x_2,x_1,u_{d,2}) \end{array} \right] \qquad \forall (x,u_d) \in D,
$$ 
where $g$ is defined as
%$$
%g(s_1,s_2) = \left\{ \begin{array}{ll} (1+\eps)s_1 & \mbox{if } s_1 < \tb \\ s_2 & \mbox{if } s_1 > \tb \\ \{(1+\eps)s_1, s_2\} & \mbox{if } s_1 = \tb \end{array} \right.
%$$
$$
g(s_1,s_2,s_3) = \left\{ \begin{array}{ll} (1+\eps)s_1 & \mbox{if } s_1 < \tb, s_2 = \tb \\ s_3 & \mbox{if } s_1 = \tb, s_2 < \tb \\ \{(1+\eps)s_1, s_3\} & \mbox{if } s_1 = \tb, s_2 = \tb \end{array} \right.
\qquad \forall (s_1,s_2) \in \Pi(D), s_3 \in P
$$
with parameter $\varepsilon \in (-1,0)$.

We are interested in the asymptotic stabilization of the set
\begin{equation}\label{eqn:A-ex3}
\A := \defset{x \in P}{ |x_2 - x_1 | = k}, \qquad k >0,
\end{equation}
which, 
for an appropriate $k$, 
would correspond
to the two timers being desynchronized since
asymptotic stability of $\A$ would imply
$$
\lim_{(t,j)\in \dom x, \ t+j \to \infty }|x_2(t,j) - x_1(t,j)| = k > 0
$$
for every complete solution $x$.
Let
$k = \frac{\eps + 1}{\eps + 2}\tb$, which for $\eps \in (-1,0)$ is such that $k \in (0,\tb)$.
Consider the candidate control Lyapunov function $V:P \to \reals$ given by
\begin{equation}\label{eqn:Vex3}
V(x) = \min
\left\{
\left|
x_2 - x_1 + k
\right|,
\left|
x_2 - x_1 - k
\right|
\right\}
\end{equation}  
Defining
$$\widetilde{\A} = \widetilde{\ell}_1 \cup \widetilde{\ell}_2 \supset \A,$$
where 
%$\widetilde{\ell}_2$ are extensions of $\ell_{1}$ and $\ell_{2}$ given by
\begin{align}
\begin{split}
\widetilde{\ell}_1& = \{x  : \left[\begin{array}{c} \tb \\ \frac{\tb}{\varepsilon + 2} \end{array}\right] + \one t \in P \cup \sqrt{2}\tb\mathbb{B}, t\in \reals \} ,\\ 
\widetilde{\ell}_2& = \{x : \left[\begin{array}{c}  \frac{\tb}{\varepsilon + 2} \\ \tb \end{array}\right] + \one t \in P \cup \sqrt{2}\tb\mathbb{B}, t\in \reals \} .
\end{split}
\label{eqn:ellH2}
\end{align}
Note 
that $\widetilde{\A}_2$ is an inflation of $\A_2$ and is such that
$V(x) = |x|_{\widetilde{\A}_2}$ on $P$.

Next, we construct the set-valued map $\Psi_d$, 
and then check \eqref{eqn:CLFJump}.
Note that $\Pi(D) = \defset{x}{\max\{ x_1,x_2\}=\tb}$.
For each $x \in \reals^2$,
\begin{equation}\non
\Psi_d(x) = \left\{
\begin{array}{ll}
P &  \mbox{ if } x \in \Pi(D) \\
\emptyset & \mbox{ otherwise,}
\end{array}
\right.\ \ 
\end{equation}
We have the following properties.
For all $x \in C$
where $V$ is differentiable, we obtain
\begin{eqnarray}
\langle 
\nabla V(x), f(x)
\rangle
 = 0
\end{eqnarray}
For each $(x,u_d) \in D$, we have that there exists
$i \in \{1,2\}$ such that
$x_i = \tb$ and $x_j \leq \tb$.
Without loss of generality, suppose
that $i = 1$ and $j = 2$.
Then,
$\eta \in g(x,u_d)$,
is such that
$\eta_1 = u_{d,1}$
and $\eta_2 = (1+\varepsilon)x_2$
if $x_2 < \tb$,
while 
$\eta_1 \in \{(1+\varepsilon)x_1,u_{d,1}\}$
and $\eta_2 \in \{(1+\varepsilon)x_2,u_{d,2}\}$
if $x_2 = \tb$.
Then,
for each $(x,u_d) \in D$,
\begin{eqnarray*}
V(\eta) - V(x) & =&  
 \min
\left\{
\left|
\eta_2 - \eta_1 + k
\right|,
\left|
\eta_2 - \eta_1 - k
\right|
\right\}
-
 \min
\left\{
\left|
x_2 - \tb + k
\right|,
\left|
x_2 - \tb - k
\right|
\right\}
\\
& = &
 \min
\left\{
\left|
\eta_2 - \eta_1 + k
\right|,
\left|
\eta_2 - \eta_1 - k
\right|
\right\}
 -
\left| x_2  - \tb +  k \right|.
\end{eqnarray*}
Using the fact that
 $k = \frac{1+\eps}{2+\eps}\tb$,
it follows that for every $x \in \Pi(D)$, $x_1 = \tb$, $x_2 \leq \tb$, 
$\eta \in g(x,u_d)$,
we have
\begin{eqnarray}
\inf_{u_d \in \Psi_d(x)} V(\eta) - V(x) \leq \eps \left| x_2 - \frac{\tb}{2 + \eps} \right| 
=\eps \left|
|x_2 - x_1| - k
\right| = \eps 
|x|_{\widetilde{\A}}.
\end{eqnarray}
Proceeding similarly for every other point in $D$,
we have that 
\eqref{eqn:CLFJump} holds
with $s \mapsto \alpha_3(s) := -\eps s$.

% control selection
Now, we determine an asymptotic stabilizing control law for the above hybrid system.
We compute the set-valued map ${\cal T}_d$ in \eqref{eqn:calTcAndd}.
To this end, 
the definition of $\Gamma_d$ gives, for each $r \geq 0$,
\begin{eqnarray*}
\Gamma_d(x,u_d,r) & = & 
\left\{
\begin{array}{ll}\displaystyle
\max_{\eta \in g(x,u_d)} V(\eta) - V(x)
+
\alpha_3(|x|_{\A})
 & 
  \mbox{ if }  (x,u_d) \in D \cap (\Ir\times \reals^{m_d})\\
-\infty &  \mbox{ otherwise }
\end{array}
\right.
% \\
%& = & 
%\left\{
%\begin{array}{ll}\displaystyle
%{\color{red}
%\max_{\eta \in g(x,u_d)} 
% \min
%\left\{
%\left|
%\eta_2 - \eta_1 + k
%\right|,
%\left|
%\eta_2 - \eta_1 - k
%\right|
%\right\}
% -
%\left| x_2  - \tb +  k \right|
%}
%-\eps |x|_{\widetilde{\A}} & \\
%\null
% & 
%\hspace{-2in}  \mbox{ if }  (x,u_d) \in D \cap (\Ir\times \reals^{m_d}),\\
%-\infty & 
%\hspace{-2in}  \mbox{ otherwise }
%\end{array}
%\right.
\end{eqnarray*}
from where we get $\Upsilon_d(x,u_d) = \Gamma_d(x,u_d,V(x))$.
Then, 
for each 
$r > 0$ and
$(x,u_d) \in D \cap \left(\Ir \cap \reals^{m_d} \right)$,
the set-valued map ${\cal T}_d$ is given by
\begin{eqnarray*}
{\cal T}_d(x) & =  & \Psi_d(x) \cap
\defset{u_d \in \U_d}{ \Upsilon_d(x,u_d) \leq 0} \\
& = & \defset{u_d \in P}{  \max_{\eta \in g(x,u_d)} 
V(\eta) - V(x)
-\eps |x|_{\widetilde{\A}} \leq 0}
\end{eqnarray*}

To determine the pointwise minimum norm control $u_d$, 
consider again 
$x_1 = \tb$ and $x_2 \leq \tb$,
which implies that $\eta \in g(x,u_d)$
is such that
$\eta_1 = u_{d,1}$
and $\eta_2 = (1+\varepsilon)x_2$
if $x_2 < \tb$,
while 
$\eta_1 \in \{(1+\varepsilon)x_1,u_{d,1}\}$
and $\eta_2 \in \{(1+\varepsilon)x_2,u_{d,2}\}$
if $x_2 = \tb$.
Then
%\begin{itemize}
%\item 
if $x_2 < \tb$
\begin{eqnarray*}
{\cal T}_d(x) & =  &  
 \defset{u_d \in P}{ 
 \min
\left\{
\left|
\eta_2 - \eta_1 + k
\right|,
\left|
\eta_2 - \eta_1 - k
\right|
\right\}
 -
\left| x_2  - \tb +  k \right|
-\eps |x|_{\widetilde{\A}} \leq 0}
\\
& = & \defset{u_d \in P}{ 
 \min
\left\{
\left|
(1+\varepsilon)x_2 - u_{d,1} + k
\right|,
\left|
(1+\varepsilon)x_2 - u_{d,1} - k
\right|
\right\}
 -
\left|x_2  - \tb +  k \right|
-\eps |x|_{\widetilde{\A}} \leq 0}
\end{eqnarray*}
%
%\begin{eqnarray*}
%{\cal T}_d(x) & =  &  \defset{u_d \in P}{ 
% \min
%\left\{
%\left|
%\eta_2 - \eta_1 + k
%\right|,
%\left|
%\eta_2 - \eta_1 - k
%\right|
%\right\}
% -
%\left| x_2  - \tb +  k \right|
%-\eps |x|_{\widetilde{\A}} \leq 0}
%\end{eqnarray*}
%\end{itemize}
For each $x_2 < \tb$, $(u_{d,1},u_{d,2})$ with $u_{d,1} = 0$  belongs to ${\cal T}_d(x)$
since
\begin{eqnarray}
& &  \min
\left\{
\left|
(1+\varepsilon)x_2 - 0 + k
\right|,
\left|
(1+\varepsilon)x_2 - 0 - k
\right|
\right\}
 -
\left|x_2  - \tb +  k \right|
-\eps |x|_{\widetilde{\A}}  \\
& & =  
\left|
(1+\varepsilon)x_2 - k
\right|
 -
\left|x_2  - \tb +  k \right|
-\eps ||x_2 - x_1| - k| \\
&  & =  
\left|
(1+\varepsilon)x_2 - k
\right|
 - (1+\varepsilon)
\left|x_2  - \tb +  k \right| \\
&  & =  
(1+\varepsilon) 
\left(
\left|
x_2 - \frac{k}{1+\varepsilon}
\right|
 - 
\left|x_2  - \tb +  k \right|
\right) \\
&  & =  
(1+\varepsilon) 
\left(
\left|
x_2 - \frac{k}{1+\varepsilon}
\right|
 - 
\left|x_2  - \tb +  k \right|
\right) \\
& & = 0
\end{eqnarray}
since
$\frac{k}{1+\varepsilon} = \frac{\tb}{2+\varepsilon}$
and 
$- \tb +  k = -\frac{\tb}{2+\varepsilon}$.
When $x_2 = \tb$, then $\eta_1 = u_{d,1}$ and
$\eta_2 = u_{d,2}$ are possible values of $\eta$, in which case
$u_{d,1} = u_{d,2} = 0$ belong to ${\cal T}_d(x)$.
The same property holds for every other possibility of $\eta$.

Then, according to \eqref{eqn:md}, 
for each 
$x \in \Pi(D)$
we can take the pointwise  minimum norm control selection
$$
\rho_d(x) := 0.
$$

%Figure~\ref{fig:Sim1-ex3} depicts a closed-loop trajectory with 
%the control selections above when the region of operation
%is restricted to $\defset{x \in P}{V(x) \geq r}$,
%$r = 0.1$.  
%\begin{figure}[h!]  
%\begin{center}  
%\psfrag{C}[][][0.9]{$C$}
%\psfrag{D}[][][0.9]{$D$}
%\psfrag{x1}[][][0.9]{\ \ $x_1$}
%\psfrag{x2}[][][0.9]{\!\! $x_2$}
%\psfrag{W(v11,v21)}[][][0.9]{\ \ \ ${\cal W}(v^1_1,v^1_2)$}
%\psfrag{W(v12,v22)}[][][0.9]{\ \ \ ${\cal W}(v^2_1,v^2_2)$}
%%{\includegraphics[width=.6\textwidth]{/Users/Ricardo/svn/papers/2011/Sanfelice.11.CDC/Matlab/RotateAndDissipateExample/Planar.eps}}  
%\end{center}  
%\caption{Closed-loop trajectory to the system in Example~\ref{ex:3}. 
%The dashed lines define the boundary  of the flow and jump sets projected onto the $x$ plane.}
%\label{fig:Sim1-ex3}
%\end{figure}
\end{example}

%%%%%%%%%%%%%%%%%%%%%%%%%%%%%%%   NEW   %%%%%%%%%%%%%%%%%%%%%%%%%%%%%%%%%%%%

\vspace{0.1in}
\balance
\bibliographystyle{unsrt}
\bibliography{long,Biblio,RGS}

\end{document}